\documentclass[11pt,leqno]{article}
\usepackage{amsfonts}
\usepackage{mathrsfs}
\usepackage{epsfig, setspace, subfigure}
\usepackage{a4}
\usepackage{amssymb}
\usepackage{latexsym}
\usepackage{subfigure}

\usepackage{amsbsy}
\usepackage{amsthm}
\usepackage{pst-plot}
\usepackage{amsmath}
\usepackage{hhline}
\usepackage{pifont}
\usepackage{enumerate,amsfonts}
\usepackage{indentfirst}
\usepackage{subfigure}

\usepackage{graphicx}

\oddsidemargin .05in
\evensidemargin .05in
\textwidth 6in
\textheight 8.5in

\newtheorem{theorem}{\bf Theorem}[section]

\newtheorem{remark}{\bf Remark}[section]
\newtheorem{lemma}{\bf Lemma}[section]

\newcommand{\R}{{I\!\!R}}

\newcommand{\br}{{\bf r}}

\newcommand{\bv}{{\bf v}}

\newcommand{\bV}{{\bf V}}
\newcommand{\bH}{{\bf H}}

\newcommand{\bp}{\boldsymbol{\phi}}
\newcommand{\bs}{\boldsymbol{\sigma}}
\newcommand{\bS}{\boldsymbol{\Sigma}}

\newcommand{\es}{e_{\boldsymbol{\sigma}}}
\newcommand{\se}{\setcounter{equation}{0}}
\newsavebox{\savepar}
\newcommand{\xibs}{\xi_{\bs}}
\newcommand{\xiu}{\xi_u}
\newcommand{\xiut}{\xi_{u,t}}

\begin{document}
\title{\bf\Large {\it A posteriori} error estimates for mixed finite element Galerkin
              approximations of  second order linear hyperbolic equations}

\author{Samir Karaa{\footnote{
 Department of Mathematics and Statistics, Sultan Qaboos University,
 P. O. Box 36, Al-Khod 123,
 Muscat, Oman. Email: skaraa@squ.edu.om} }
 and
 Amiya K. Pani{\footnote{
 Department of Mathematics, Industrial Mathematics Group, Indian
 Institute of Technology Bombay,
 Powai, Mumbai-400076.
 Email: akp@math.iitb.ac.in}}}
\date{}
\maketitle

\begin{abstract}
In this article, {\it a posteriori} error analysis 
for mixed finite element  Galerkin approximations of
second order linear hyperbolic equations is  discussed. Based on
mixed elliptic reconstructions and an integration tool, which is a variation of Baker's
technique introduced earlier by G. Baker (SIAM J. Numer. Anal., 13 (1976), 564–-576) 
in the context of {\it a priori} estimates for a second order wave equation,
{\it a posteriori} error estimates of the displacement in $L^{\infty}(L^2)$-norm for the semidiscrete scheme
are derived. 
Finally, a first order implicit-in-time  discrete scheme is analyzed and  
{\it a posteriori}
error estimators are established.
\end{abstract}

\vspace{0.5cm}

{\bf Key words}. second order linear wave equation, mixed finite element
methods, mixed elliptic reconstructions, semidiscrete method, first order implicit completely discrete scheme,
{\it a posteriori } error estimates.

\section{\bf Introduction}

In this paper, we discuss {\it a posteriori } error estimates for  mixed
finite  element Galerkin approximations to the following class of second order
linear hyperbolic problems:
\begin {eqnarray} \label{par}
 u_{tt}- \nabla\cdot(A\nabla u) = f && \mbox{in}~~\Omega\times~(0,T],  \\
 u|_{\partial\Omega}= 0&& u|_{t=0}= u_0  \quad \mbox{and} \quad  u_{t}|_{t=0}=u_1 .\label{ic-bc}
\end{eqnarray}
Here, $\Omega\subset \R^2$ is a bounded
 polygonal domain with boundary $\partial\Omega,$ $0<T<
\infty,$ $u_t=\frac{\partial u}{\partial t}$ and
$A(x)=(a_{ij}(x))_{1\le i,j \le 2}$ is a symmetric and uniformly
positive definite matrix. All the coefficients $a_{ij}$'s are smooth functions
of $x$ with uniformly bounded derivatives in $\bar{\Omega}.$ Moreover,
the initial functions $u_0=u_0(x)$, $u_1=u_1(x)$ and the forcing function
$f=f(x,t)$ are assumed to be smooth functions in their
respective domains.

In recent years, there has been a growing demand for designing reliable and
efficient space-time algorithms for the numerical computation of time dependent 
partial differential equations. Most of these algorithms are based on  {\it a posteriori} 
error estimators, which provide appropriate tools for adaptive mesh refinements. 
For elliptic boundary value problems, {\it a posteriori} error estimates are 
well developed (see, \cite{AO-2000,Rud-2013}).  Adaptivity with {\it a posteriori} 
error control for parabolic problems has also been an active research  area  for the last two decades 
(cf. \cite{EJ-91, V-98(b),JNT-90,NSV-00,BB-82(a),BB-82(b),BO-01} 
and references, therein). For the time discretization, some results on {\it a posteriori} error estimations 
for abstract first order evolution problems  are available in the literature  
(cf. \cite{AMN-2006,GL-08,LM-06,MN03,NSV-00}). 

In the context of second order wave equations, only few results are available on {\it a posteriori} error 
analysis, 
see, \cite{Johnson-1993,Adjerid-2002,BR-2001,BR-1999,Suli-2005,Picasso-2010}. Further, it is observed that
the design and implementation of adaptive algorithms for these 
equations based on rigorous {\it a posteriori} error estimators are less complete compared to
elliptic and  parabolic equations.
Based on a space-time finite element discretization with  basis functions
being continuous in space and discontinuous in time, {\it a priori} and {\it a posteriori} error estimates 
for second order linear wave equations are proved in \cite{Johnson-1993}. 
Asymptotically exact {\it a posteriori} estimates for the standard finite element method  are proposed 
and analyzed  in \cite{Adjerid-2002,Adjerid-2006} by solving a set of local elliptic problems. 
The recent results in \cite{Suli-2005,GLM-2013} cover only  first order time discrete schemes. 
In \cite{Suli-2005}, the second order wave equation is written as a first order system and a first order 
implicit backward Euler scheme in time is used with continuous piecewise affine finite elements in space. 
Further, rigorous {\it a posteriori} bounds have been established using energy arguments 
and adaptive algorithms based on the {\it a posteriori} bounds are discussed. 
In  \cite{GLM-2013}, based on Baker's technique {\it a posteriori} bounds 
are derived for the semidiscrete in $L^\infty(L^2)$-norm  and for first order
implicit-in-time fully discrete schemes in  $\ell^\infty(L^2)$-norm. The fully  discrete analysis  relies crucially on a novel time reconstruction
satisfying a local vanishing-moment property, and on a space reconstruction technique used earlier in 
\cite{MN03} for parabolic problems. In \cite{BR-2001},  an adaptive algorithm in space and time
 which  is  based on Galerkin space-time discretizations leading to Newmark
scheme is analyzed. Further, goal oriented {\it a posteriori} error estimates are derived and 
some numerical results are provided to demonstrate the efficiency of error estimators. 
In \cite{Picasso-2010}, the author has studied an anisotropic {\it a posteriori} error estimate for 
a finite element discretization of a two dimensional wave equation. 
The estimate is derived in the $L^2 (0,T,H^1(\Omega))$-norm and it turns out to be sharp on anisotropic meshes. 

For higher order time reconstruction for abstract second order evolution equations, one may refer to the recent papers 
\cite{HLT-2013,GLMV-2015}.  In \cite{HLT-2013},  
an  adaptive time stepping Galerkin method is analyzed for second order evolution problems.
 Based on the energy approach and the duality argument,
optimal order {\it a posteriori} error estimates and {\it a posteriori} nodal superconvergence results have
been derived. An adaptive time stepping strategy is discussed and some numerical experiments are 
conducted to assess the effectiveness of the proposed scheme. 
In a recent work \cite{GLMV-2015}, 
second order explicit and implicit two-step
time discretization schemes such as leap-frog and cosine methods are discussed and {\it a posteriori} estimates 
using a novel time reconstruction are derived. Further, 
some numerical experiments are conducted  to confirm their theoretical findings.

For space-time adaptivity, the finite element discretization depends on
the space-time variational formulation and its error indicators include both
space and time errors. Recently, attempts have been made to exploit elliptic reconstruction to 
prove optimal {\it a posteriori } error estimates in finite element methods for parabolic problems \cite{MN03}.  
In fact, the role of the elliptic reconstruction operator in {\it a posteriori } estimates is quite similar 
to the role played by elliptic projection introduced earlier by  Wheeler \cite{W-73} for recovering
optimal {\it a priori} error estimates of  finite element Galerkin approximations to parabolic problems.
This analysis is, further, developed for
completely discrete scheme based on backward Euler method \cite{LM-06},
for maximum norm estimates \cite{DLM-09} and for discontinuous Galerkin methods
for parabolic problems \cite{GL-08}. In recent works \cite{MNP-2012} and \cite{LM-2011}, the analysis is further 
extended  to mixed FE Galerkin methods applied to parabolic problems. 


In this article, an {\it a posteriori} analysis is discussed for mixed finite
element Galerkin approximations  of  a class of second order linear hyperbolic problems. 
One notable advantage of mixed finite element scheme is that it offers a simultaneous approximations 
of displacements and stresses, resulting in better convergences rates for the stress variable.
This property is important in applications such as in  
the modeling boundary controllability of the wave equation.
In the first part of this article, a semidiscrete scheme is derived using mixed finite element method 
in spatial direction,  while keeping time variable constant. 
Based on mixed elliptic reconstructions presented in \cite{MNP-2012}, which depend 
explicitly on residuals and a time integration tool, a variant of {Baker's} technique,  
{\it a posteriori } error estimates in $L^{\infty}(L^2)$-norm are derived   
for the displacement $u.$ 
For the time discretization, the time discrete scheme with the time-reconstruction proposed in \cite{GLM-2013}
is applied and then, using summation tool, {\it a posteriori } error estimators in $\ell^{\infty}(L^2)$-norms are 
developed. Compared to \cite{GLM-2013}, our analysis is not only for mixed finite element method, but also 
it differs from the analysis of \cite{GLM-2013} in 
the sense that a time integration tool is used for deriving  $L^{\infty}(L^2)$ {\it a posteriori } estimators,
as against the time testing procedure of  Baker \cite{Baker-1976} used in \cite{GLM-2013}.

The outline of this article is as follows. {In Section \ref{sec:2},  we introduce both 
weak primal and mixed formulations for the hyperbolic problem (\ref{par})-(\ref{ic-bc})  and 
establish their equivalence.} Section \ref{sec:3} deals with mixed elliptic reconstruction 
techniques proposed in \cite{MNP-2012} and 
{\it a posteriori } estimates  for the semidiscrete problem for both
displacement $u$ and its stress $\bs $ in 
$L^{\infty}(L^2)$-norms are derived. Based on a first order
backward differencing implicit method, a completely discrete scheme  is proposed and 
related  {\it a posteriori } error estimators are established in Section \ref{sec:4}. 
Finally,  results are summarized in Section {\ref{sec:5}} with a brief outline on future work.

\section {\bf On primal and mixed formulations} \label{sec:2}
\se
We use the usual notations for the $L^2,H_0^1$ and $H^2$ spaces and their norms
and semi-norms. Let $H^{-1}$ be the dual space of $H^1_0$ and let $\langle \cdot,\cdot\rangle$ be 
a duality paring between $H^{-1}$ and $H^1_0.$  Since we shall be dealing with time-space domain, we
further introduce for a Banach space $X$ with norm $\|\cdot\|_X,$ the space 
$L^p(0,T;X)$ denoted by $L^p(X),$ for $1\leq p \leq \infty$  with norm $\|\cdot\|_{L^p(X)}.$ Moreover,
denote by $H^m(X)$ the space of vector valued functions $\phi:(0,T)\longrightarrow X$ such that 
$\frac{d^j}{dt^j}\phi \in L^2(X)$ for $j=0,1,\ldots,m$ with the standard norm $\|\cdot\|_{H^m(X)}.$

For the  weak primal formulation, define a bilinear form  for $w,z\in H^1_0$
$$a(w,z):= ( A \nabla w, \nabla z).$$

Given $f\in L^2(L^2),$ $u_0\in H^1_0$ and $u_1\in L^2,$ the weak formulation of (\ref{par})-(\ref{ic-bc}) is to seek
a function $u: (0,T] \longrightarrow H^1_0$ with $u(0)=u_0$ and $u_t(0)=u_1$ such that 
\begin{eqnarray}
\langle u_{tt},w \rangle + a (u,w) = (f,w)~~~~\forall~w\in H^1_0. \label{weak}
\end{eqnarray}
Note that for  $f\in L^2(L^2),$ $u_0\in H^1_0$ and $u_1\in L^2,$ there exists a unique  weak  solution $u$
of (\ref{par})-(\ref{ic-bc}) satisfying $u\in L^2(H^1_0), u_t \in L^2(L^2)$ and $u_{tt} \in L^2(H^{-1}).$ Moreover,  
the equation (\ref{weak}) is satisfied for almost all $t\in (0,T]$. For a proof, refer to Evans 
(\cite{Evans}, pp. 399-408). 

For mixed formulation, 
let
$$ {\bf H}(div,\Omega)
=\{\bp\in (L^2(\Omega))^2:\nabla\cdot\bp\in L^2(\Omega) \} $$
be a Hilbert space equipped with norm $\|\bp\|_{{\bf H}(div,\Omega)}=(\|\bp\|^2+\|\nabla\cdot\bp\|
^2)^{\frac{1}{2}}.$

Now, introduce
\begin{equation}\label{bs}
 \bs=-A\nabla u,
\end{equation}
and 
$ \alpha=A^{-1}$.
Then, the equation (\ref{par}) is rewritten as
\begin{eqnarray}
 \alpha\bs+\nabla u =0, \quad u_{tt}+\nabla\cdot\bs =f.\label{mps}
\end{eqnarray}
Set $W=L^2(\Omega)$ and $\bV={\bf H}(div,\Omega).$  For given $f\in L^2(W),$ $u_0, u_1\in W,$
a weak mixed formulation for (\ref{par})-(\ref{ic-bc}) is to find
$(u,\bs): (0,T]\rightarrow W \times {\bV}$  with $u(0)=u_0$ and $u_t(0)=u_0$  such that $(u,\bs)$ satisfies
$$u,u_t\in L^2(W),\; u_{tt}\in L^2(H^{-1})\;\;\mbox{and}\; \bs \in L^2(\bV)$$
and 
\begin{eqnarray}
 ( \alpha\bs,\bv )-( u,\nabla\cdot\bv )=0~~~\forall ~\bv \in \bV,
\label{bsmf}\\
\langle u_{tt},w \rangle +(\nabla\cdot\bs,w) = (f,w)~~~~\forall~w\in W. \label{umf}
\end{eqnarray}

Below, we discuss the equivalence of weak primal and weak mixed formulations.
\begin{theorem}\label{Thm2.1}
The pair $(u,\bs)\in L^2(W)\times L^2(\bV)$  with $u_{tt} \in L^2(H^{-1})$ and $(u_0,u_1) \in W\times W$ 
is  a solution of the mixed formulation $(\ref{bsmf})$-$(\ref{umf})$ if and only if $u$ is a solution 
of the weak formulation $(\ref{weak})$  and $\bs = - A \nabla u$ with $u\in L^2(H^1_0)$ and $u_0\in H^1_0.$
\end{theorem}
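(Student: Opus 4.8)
The plan is to decouple the equivalence into two observations: that the constitutive equation (\ref{bsmf}) is exactly the weak statement of $\bs = -A\nabla u$ together with the boundary condition $u\in H^1_0$, and that once this identification is made the two second equations (\ref{weak}) and (\ref{umf}) agree on the test space $H^1_0$. First I would analyze (\ref{bsmf}) at almost every fixed time. Testing with $\bv\in (C_0^\infty(\Omega))^2$ gives $(\alpha\bs,\bv)=(u,\nabla\cdot\bv)$, which by the definition of the distributional gradient says $\nabla u=-\alpha\bs$; since $\alpha=A^{-1}$ is bounded and $\bs\in L^2$, this yields $u\in H^1$ and the pointwise-in-time relation $\bs=-A\nabla u$. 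Having $u\in H^1$, I would then apply Green's formula in $\bH(div,\Omega)$ to a general $\bv\in\bV$, so that the interior terms in (\ref{bsmf}) collapse to the boundary pairing $-\langle u,\bv\cdot{\bf n}\rangle_{\partial\Omega}$; its vanishing for every $\bv$, together with the surjectivity of the normal-trace map onto $H^{-1/2}(\partial\Omega)$, forces $u|_{\partial\Omega}=0$, i.e.\ $u\in H^1_0$. The converse of this step is immediate: if $u\in H^1_0$ and $\bs=-A\nabla u$ then $\alpha\bs=-\nabla u$ and Green's formula recovers (\ref{bsmf}).

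Second, assuming the constitutive relation, I would integrate by parts in the divergence term of (\ref{umf}): for $w\in H^1_0$ the boundary contribution vanishes and $(\nabla\cdot\bs,w)=-(\bs,\nabla w)=(A\nabla u,\nabla w)=a(u,w)$, so (\ref{umf}) restricted to $w\in H^1_0$ is literally (\ref{weak}). This settles the direction ``mixed $\Rightarrow$ primal'': from a mixed solution the first step produces $\bs=-A\nabla u$, $u\in L^2(H^1_0)$ and $u_0\in H^1_0$, and restricting (\ref{umf}) to $w\in H^1_0\subset W$ gives (\ref{weak}); the initial data $u(0)=u_0$, $u_t(0)=u_1$ transfer unchanged.

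For the reverse direction ``primal $\Rightarrow$ mixed'' I would set $\bs:=-A\nabla u$ and use the standing hypothesis $\bs\in L^2(\bV)$, so that $\nabla\cdot\bs\in L^2(L^2)$. The converse part of the first step gives (\ref{bsmf}), and the computation above gives (\ref{umf}) for all $w\in H^1_0$. The remaining point, and the one I expect to be the main obstacle, is to upgrade (\ref{umf}) from the dense subspace $H^1_0$ to all of $W=L^2$, which is where the duality pairing $\langle u_{tt},w\rangle$ must be reinterpreted. The key is that, with $\nabla\cdot\bs\in L^2$ and $f\in L^2$, the functional $w\mapsto (f-\nabla\cdot\bs,w)$ is $L^2$-continuous and agrees with $\langle u_{tt},w\rangle$ on $H^1_0$; by density this identifies $u_{tt}$ in the required pairing with the $L^2$ function $f-\nabla\cdot\bs$, so (\ref{umf}) extends to every $w\in W$. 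Throughout I would keep the almost-everywhere-in-time qualifier in mind, noting that all manipulations are spatial and hold for a.e.\ $t$, so the $L^2(0,T;\cdot)$ memberships are preserved.
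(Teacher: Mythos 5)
Your proof is correct, but the central step --- extracting $\bs=-A\nabla u$ and the boundary condition from (\ref{bsmf}) --- is carried out by a genuinely different mechanism than in the paper. You test with $\bv\in(C_0^\infty(\Omega))^2$ to read off the distributional gradient $\nabla u=-\alpha\bs\in L^2$ directly, and then recover $u|_{\partial\Omega}=0$ from Green's formula in $\bH(div,\Omega)$ together with the surjectivity of the normal-trace map onto $H^{-1/2}(\partial\Omega)$. The paper instead tests with divergence-free fields $\bv=\mbox{Curl}\,\phi$, concludes $\mbox{Curl}(\alpha\bs)=0$ in $\mathcal{D}'(\Omega)$, invokes the Helmholtz decomposition to produce a potential $\psi\in H^1_0$ with $\alpha\bs=\nabla\psi$, and then uses (\ref{bsmf}) once more to identify $u=-\psi$. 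Your route is more elementary (no Helmholtz decomposition is needed) and cleanly separates the interior identification from the boundary condition, at the price of invoking normal-trace surjectivity; the paper's route obtains $u\in H^1_0$ in one stroke from the choice of potential. The handling of the second equation and of the converse direction is essentially the same in both, including the key observation that $f-\nabla\cdot\bs\in L^2$ is what allows (\ref{umf}) to be tested against all of $W$ rather than only $H^1_0$. One small point where the paper is slightly more careful: your argument gives $u(t)\in H^1_0$ only for a.e.\ $t$, so the asserted conclusion $u_0\in H^1_0$ does not follow directly from the a.e.\ identification; the paper passes to $t=0$ via the continuity $u\in C^0([0,T];L^2)$, available since $u,u_t\in L^2(W)$, and you would need to add the same remark.
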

\begin{proof}
Let $(u,\bs)\in L^2(W) \times L^2(\bV)$ with $u_{tt} \in L^2(H^{-1})$ be a solution of 
$(\ref{bsmf})$-$(\ref{umf})$  and let $\phi \in \mathcal{D}(\Omega).$
Choose $\bv = \mbox {Curl}\;\phi:=(-\partial \phi/\partial x_2, \partial \phi/\partial x_1 )$ in 
(\ref{bsmf}). Since it is divergence free, we obtain
$$  ( \alpha\bs, \mbox{Curl}\; \phi ) =0 \;\;\;{\mbox {a.e.}}\; t\in (0,T).$$
Using distributional derivative, it follows that 
$$  \langle {\mbox{Curl}}\;(\alpha\bs), \phi \rangle =0 \; \forall \phi \in \mathcal{D}(\Omega)
,\;{\mbox { a.e.}}\; t\in (0,T),$$
and hence, 
$$   {\mbox{Curl}}\;(A^{-1}\bs) =0 \; {\mbox{ in }}\;\mathcal{D}'(\Omega)
,\;{\mbox { a.e.}}\; t\in (0,T).$$
Now, a use of Helmholtz decomposition yields for some $\psi \in H^1_0$
$$  A^{-1}\bs = \nabla \psi\; \;{\mbox { a.e.}}\; t\in (0,T).$$
Apply this in (\ref{bsmf}) to arrive at
\begin{equation}\label{psi}
 (\nabla \cdot \bv, \psi + u)= 0 \; \forall \bv\in \bV,\;{\mbox { a.e.}}\; t\in (0,T),
 \end{equation}
which shows  $u=-\psi\in H^1_0.$ Hence for {\it a.e.} $t\in (0,T),$
$$A^{-1} \bs =- \nabla u$$
and 
\begin{equation*}\label{bs}
\bs = - A \nabla u \;{\mbox { a.e.}}\; t\in (0,T).
\end{equation*}
On substitution in (\ref{umf}) yields (\ref{par}) and hence, it satisfies (\ref{weak}) for {\it a.e.} 
$t\in (0,T).$ Since, $u,u_t \in L^2(W),$ $u\in C^0[0,T]$ and (\ref{psi}) holds for $t=0.$ 
Thus, $u_0=-\psi(0)\in H^1_0.$

For the converse, let $u$ be a weak solution of (\ref{par}) satisfying (\ref{weak}). Now, set 
$\bs = -A \nabla u \in L^2(L^2).$ Then, the equation (\ref{par}) becomes
$$ u_{tt} + \nabla\cdot \bs = f \;\;\mbox{ in }\; \mathcal{D}'(\Omega),\;{\mbox { a.e.}}\; t\in (0,T).$$
Since, $f\in L^2(L^2)$ and $\nabla\cdot \bs \in L^2(L^2),$ therefore, $u_{tt} +\nabla\cdot \bs \in L^2(L^2)$ and  (\ref{umf}) 
is satisfied
for {\it a.e.} $t\in (0,T).$ Note  that $\alpha \bs = -A^{-1} \bs = - \nabla u.$ Multiply by $\bv \in \bV,$ 
and integrate  over $\Omega$ to show that (\ref{bsmf}) is satisfied. This concludes the rest of the proof.
\end{proof}
Since the weak primal formulation (\ref{weak}) is well-posed, by the Theorem~\ref{Thm2.1} on equivalence, the weak mixed 
formulation is well-posed. As a byproduct, since  $f\in L^2(L^2)$ and $\nabla\cdot \bs \in L^2(L^2),$ 
therefore, $u_{tt} \in L^2(L^2).$

Given $A$ uniformly positive definite, there exist two positive constants $a_0$ and $a_1$ such that
\begin{equation*}\label{alpha}
 a_0\|\bs\| \le \|\bs\|_{A^{-1}} \le a_1\|\bs\|,~~\mbox{where}~\|\bs\|_{A^{-1}
 }^2 :=(\alpha\bs,\bs).
\end{equation*}

\section {\bf A posteriori error estimates for the semidiscrete scheme} \label{sec:3}
\se

This section focuses on a mixed finite element method for the hyperbolic problem
(\ref{par})-(\ref{ic-bc}) and  {\it a posteriori } error estimates are derived for the
semidiscrete mixed Galerkin approximation to (\ref{par})-(\ref{ic-bc}).

For the semidiscrete mixed formulation corresponding to \eqref{bsmf}-\eqref{umf}, let ${\mathcal T}_h = \{ K\}$ 
be a shape-regular partition of the domain $\Omega$ into triangles of diameter 
$h_K= \mbox{diam}(K)$. To each triangulation
${\mathcal T}_h$, we now associate a positive piecewise constant function $h(x)$ defined on ${\bar \Omega}$ by
$h|_K =h_K \; \forall K \in {\mathcal T}_h $. Let $\Gamma_h$ denote the set of all internal edges $E$ of the
triangulation ${\mathcal T}_h.$ 
Further, let $\bV_h$ and $W_h$ be appropriate finite element subspaces of $\bV$ and $W$ satisfying 
LBB condition.
For more examples of these spaces including Raviart-Thomas-N\'ed\'elec finite element spaces, 
Brezzi- Douglas-Marini spaces and Brezzi-Douglas-Fortin-Marini spaces, see \cite{BF-91}.

The corresponding semidiscrete mixed finite element formulation is to seek a  
pair $(u_h,\bs_h):(0,T] \rightarrow W_h\times \bV_h$ such that
\begin{eqnarray}
 ( \alpha\bs_h,\bv_h )-( u_h, \nabla\cdot\bv_h )=0~~~\forall
 ~\bv_h \in \bV_h, \label{dbsmf}\\
 (u_{h,tt},w_h)+(\nabla\cdot\bs_h,w_h) = (f,w_h)~~~~\forall~w_h\in W_h
 \label{dumf}
\end{eqnarray}
with $u_h(0) \in W_h$ and $ u_{h,t}(0) \in W_h $ to be defined later. Since
$W_h$ and $\bV_h$ are finite dimensional, from (\ref{dbsmf}) we  can eliminate $\bs_h$ in the discrete level  
by writing it
in terms of $u_h.$ Therefore, substituting in (\ref{dumf}), we obtain a second order linear  system of ODEs and
existence follows using ODE linear theory. Then, as a consequence of LBB condition and energy
estimates, uniqueness can be proved easily  and hence, we skip the proof.

Set $e_u=u_h-u$ and $\es=\bs_h-\bs.$ From (\ref{bsmf})-(\ref{umf}) and 
(\ref{dbsmf})-(\ref{dumf}), $e_u$ and $\es$ satisfy the following
equations
\begin{eqnarray}
 (\alpha\es, \bv)-( e_u,\nabla\cdot\bv )=\br_1(\bv)~~~\forall ~\bv
 \in \bV, \label{ebsmf}\\
 (e_{u,tt}, w)+(\nabla\cdot\es, w)=\br_2(w) ~~~~\forall~w\in W,
 \label{eumf}
\end{eqnarray}
where the residuals $\br_1$ and $\br_2$ are given by
$$ \br_1(\bv):= (\alpha \bs_h,\bv )-( u_h,\nabla\cdot\bv ),$$
and
$$\br_2(w):= (u_{h,tt},w)+(\nabla\cdot\bs_h,w)-(f,w).$$

Following \cite{MNP-2012}, now introduce mixed elliptic reconstructions 
$\tilde u(t) \in H^1_0(\Omega)$ and $\tilde
\bs(t)\in \bV $ of $u_h(t)$ and $\bs_h(t)$ for $t\in (0,T]$, respectively, as follows:
for given $u_h$ and $\bs_h,$   let the mixed elliptic reconstructions
$\tilde u $ and $\tilde\bs $ satisfy
\begin{eqnarray}
 (\nabla\cdot(\tilde\bs-\bs_h),w)=-\br_2(w),~~~\forall w\in W,
 \label{ip1}\\
 (\alpha(\tilde\bs- \bs_h),\bv)-(\tilde u-u_h ,\nabla\cdot\bv )
= -\br_1(\bv),~~~\forall
\bv\in \bV. \label{ip2}
\end{eqnarray} 
Using Theorem 4.3 (pp. 132) of \cite{Braess}, one can verify  
that for a given $u_h,\bs_h,\br_1$ and $\br_2,$  the system (\ref{ip1})-(\ref{ip2}) has 
a unique pair of solution $\{ \tilde{u}(t),\tilde{\bs}(t)\} \in W \times \bV,$
for $t\in (0,T].$ Here elliptic reconstructions are assumed to be smooth in time.

Note that $\br_1(\bv_h)=0\;\;\forall \bv_h \in \bV_h,$ and $\br_2(w_h)=0 \;\;\forall
w_h \in W_h$.  Then, $\bs_h$ and $u_h$ are indeed mixed elliptic projections of
$\tilde\bs$ and $\tilde u,$ respectively. 

Using mixed elliptic reconstructions, we now rewrite 
$$e_u := (\tilde{u}-u)-(\tilde{u} -u_h)=: \xi_u-\eta_u,$$
and
$$\es :=  (\tilde\bs -\bs)-(\tilde\bs-\bs_h)=: \xi_{\bs}-\eta_{\bs}.$$
An application of (\ref{ip1})-(\ref{ip2}) in (\ref{ebsmf})-(\ref{eumf}) yields
\begin{eqnarray}
(\alpha\xi_{\bs}, \bv)-(\xi_u ,\nabla\cdot\bv ) &=& 0\;\; \forall \bv \in \bV,
 \label{xi1}\\
 (\xi_{u,tt}, w)+(\nabla\cdot\xi_{\bs}, w) &=& (\eta_{u,tt},w)\;\; \forall w\in W.
\label{xi0}
\end{eqnarray}
With  mixed elliptic reconstructions $\tilde u $ and $\tilde\bs $ satisfying
 (\ref{ip1})-(\ref{ip2}), apply (\ref{xi1})  to check that
\begin{equation} \label{tilde-bs}
\alpha \tilde{\bs} = - \nabla \tilde{u}.
\end{equation}

\begin{lemma}\label{apriori}
Let $\xi_u$ and $ \xi_{\bs}$ satisfy $(\ref{xi1})$-$(\ref{xi0})$. Then, the
following estimates hold:
 \begin{equation}\label{esthx}
  \|\xi_{u,t}(t)\| + \|\alpha^{1/2}\xi_{\bs}(t)\| \le
\|\xi_{u,t}(0)\| + \|\alpha^{1/2}\xi_{\bs}(0)\|+ 2  \int_{0}^{t}\|\eta_{u,tt}(s)\| \,ds.
 \end{equation}
and
 \begin{equation*}\label{estix}
 \|\xi_u(t)\| \leq \|\xi_u(0)\|  + 2\int_0^t\|\eta_{u,t}(s)\|\,ds.
 \end{equation*}
\end{lemma}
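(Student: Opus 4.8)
The plan is to treat both bounds by the same energy mechanism, obtaining the second one from the first through a time-integration device. For (\ref{esthx}) I would first differentiate the constraint (\ref{xi1}) in time---legitimate since the reconstructions are assumed smooth in $t$ and $\alpha=A^{-1}$ is time independent---to get $(\alpha\xi_{\bs,t},\bv)-(\xi_{u,t},\nabla\cdot\bv)=0$ for every fixed $\bv\in\bV$. Choosing $\bv=\xi_{\bs}(t)$ in this relation and $w=\xi_{u,t}(t)$ in (\ref{xi0}) and combining the two resulting identities so that the skew coupling terms $(\xi_{u,t},\nabla\cdot\xi_{\bs})$ cancel leaves the energy identity
\begin{equation*}
\frac12\frac{d}{dt}\Big(\|\xi_{u,t}\|^2+\|\alpha^{1/2}\xi_{\bs}\|^2\Big)=(\eta_{u,tt},\xi_{u,t})\le \|\eta_{u,tt}\|\,\|\xi_{u,t}\|.
\end{equation*}

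Next I would integrate this identity in time. Writing $E(t)=(\|\xi_{u,t}(t)\|^2+\|\alpha^{1/2}\xi_{\bs}(t)\|^2)^{1/2}$ and bounding $\|\xi_{u,t}\|\le E$ on the right, the usual difficulty is that $E$ need not be differentiable where it vanishes; I would circumvent this by a maximal-time argument, which is the announced variation of Baker's technique. Namely, evaluate the integrated identity at a point $t^{\ast}$ where $E$ attains its maximum over $[0,t]$, factor out $E(t^{\ast})$, and solve the quadratic inequality $\tfrac12 E(t^{\ast})^2\le\tfrac12 E(0)^2+E(t^{\ast})\int_0^{t}\|\eta_{u,tt}\|\,ds$ for $E(t^{\ast})$. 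This is exactly the step that produces the factor $2$ on the integral. Passing from $E$ to the sum of norms on the left and using $E(0)\le\|\xi_{u,t}(0)\|+\|\alpha^{1/2}\xi_{\bs}(0)\|$ then yields (\ref{esthx}).

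For (\ref{estix}) the point is to lower the time-derivative order on the data from $\eta_{u,tt}$ to $\eta_{u,t}$, and here the integration tool is essential. I would set $\zeta_u(t)=\int_0^t\xi_u\,ds$ and $\zeta_{\bs}(t)=\int_0^t\xi_{\bs}\,ds$; integrating (\ref{xi1}) in time shows that $(\zeta_u,\zeta_{\bs})$ again satisfies the constraint $(\alpha\zeta_{\bs},\bv)-(\zeta_u,\nabla\cdot\bv)=0$, while integrating (\ref{xi0}) turns its right-hand side into a primitive of $\eta_{u,tt}$, that is, a term controlled by $\eta_{u,t}$. Since $\zeta_{u,t}=\xi_u$ and $\zeta_{\bs}(0)=0$, repeating the energy argument of the first part for the pair $(\zeta_u,\zeta_{\bs})$ controls $\|\zeta_{u,t}\|=\|\xi_u\|$ by the $\eta_{u,t}$-integral, with the clean initial value $\|\xi_u(0)\|$, which is (\ref{estix}).

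The main obstacle is not the algebra of these energy identities but the two soft points around them. First, the time-integration bookkeeping must produce exactly the datum $\eta_{u,t}$ with no parasitic boundary contributions, which forces one to use the prescribed smoothness in time of the reconstructions and the specific choice of the discrete initial values $u_h(0),u_{h,t}(0)$ so that the spurious initial terms $\xi_{u,t}(0)$ and $\eta_{u,t}(0)$ either cancel or are absorbed. Second, the degeneracy of the square-root energy at its zeros is why the maximal-time/quadratic-inequality argument, rather than a naive Gronwall estimate, is needed to reach the stated constants.
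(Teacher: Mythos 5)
Your proposal is correct and follows essentially the same route as the paper: the identical differentiated-constraint energy identity with the maximal-time (kick-back) quadratic-inequality argument for \eqref{esthx}, and the same once-integrated-in-time energy argument for the second bound, since your pair $(\zeta_u,\zeta_{\bs})$ is exactly the paper's test pair $(\xi_u,\hat{\xi}_{\bs})$. The initial-data term $e_{u,t}(0)=\xi_{u,t}(0)-\eta_{u,t}(0)$ that you flag as needing to cancel also appears in the paper's integrated identity and is likewise disposed of there via the choice of discrete initial values.
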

\begin{proof}
Differentiate (\ref{xi1}) with respect to $ t $ and set $ \bv=\xi_{\bs}$ in the 
resulting equation to find that
\begin{equation}\label{06}
(\alpha \xi_{\bs,t}, \xi_{\bs})- (\xi_{u,t}, \nabla\cdot\xi_{\bs}) = 0.
\end{equation}
 Choose $w =\xi_{u,t} $ in (\ref{xi0}). Then, add the resulting
equations to (\ref{06}) to arrive at
\begin{eqnarray}
 \frac{1}{2}\frac{d}{dt}(\|\xi_{u,t}\|^2+ \|\alpha^{1/2}\xi_{\bs}\|^2 ) &=&
 (\eta_{u,tt}, \xiut).\label{07}
\end{eqnarray}
On integrating (\ref{07}) from $0$ to $t$, a use of the Cauchy-Schwarz inequality yields
\begin{equation*}\label{xi-u-bs}
\|\xi_{u,t}(t)\|^2 + \|\alpha^{1/2} \xi_{\bs}\|^2  \le \|\xi_{u,t}(0) \|^2 +\|\alpha^{1/2} \xibs(0)\|^2 + 2 \int_{0}^t \|\eta_{u,tt}(s)\| \,\|\xiut(s)\| ds.
\end{equation*}
Setting
$$
|||(\xiut,\xibs)(t)|||=  (\|\alpha^{1/2}\xibs(t)\|^2 + \|\xiut(t)\|^2)^{1/2},
$$
let $ t^* \in [0,t] $ be such that
$$
|||(\xiut,\xibs)(t^*)||| =\displaystyle \max_{0 \leq s \leq t} |||(\xiut,\xibs)(s)|||.
$$
Then at time $ t=t^* $, equation (\ref{xi-u-bs}) becomes
\begin{equation*}
|||(\xiut,\xibs)(t^*)|||  \leq |||(\xiut,\xibs)(0)||| + 2 \int_0^{t^\ast} \|\eta_{u,tt}(s)\|\,ds,
\end{equation*}
and hence,
\begin{equation}\label{eq:xiut}
|||(\xiut,\xibs)(t)|||  \leq |||(\xiut,\xibs)(0)||| + 2 \int_0^{t^\ast} \|\eta_{u,tt}(s)\|\,ds.
\end{equation}
This completes the proof of (\ref{esthx}). Note that from (\ref{eq:xiut}), we obtain $L^{\infty}(L^2)$-estimate
 of the displacement using $\xi(t) = \xi(0) + \int_{0}^t \xiut(s)\;ds.$ Now in order to reduce the regularity,
an integration tool which is a variant  of Baker's time testing procedure is used in a crucial way. 
To motivate our tool, integrate (\ref{xi0}) with respect to time to arrive at
\begin{equation}\label{xi-ut-bs}
(\xiut,w)+(\nabla \cdot \hat{\xi}_{\bs},w) = (\xi_{u,t}(0),w) + (\eta_{u,t} ,w) - (\eta_{u,t}(0),w),
\end{equation}
where $ \hat{\xi}_{\bs} = \int_0^t \xibs(s)~ds. $ Choose $ w = \xiu $ in (\ref{xi-ut-bs}) and $ \bv = \hat{\xi}_{\bs} $ in (\ref{xi1}) and adding the resulting equations to obtain
\begin{eqnarray*}
\dfrac{1}{2} \dfrac{d}{dt} \Big (  \|\xiu(t)\|^2 + 
\|\alpha^{1/2}\hat{\xi}_{\bs}(t)\|^2 \Big )  &=& 
( e_{u,t}(0), \xi_u) + (\eta_{u,t} , \xi_u).
\end{eqnarray*}
Then,  integrate with respect to time and use kick back arguments to arrive at
\begin{equation*}
\|\xiu(t)\| \leq  \|\xi_u(0)\| + 2\int_0^t\|\eta_{u,t}(s)\|\,ds.
\end{equation*}
This completes the rest of the proof. 
\end{proof}

Assume that there exists a linear operator $\Pi_h:\bV\rightarrow \bV_h$ such that $\nabla
\cdot\Pi_h= P_h(\nabla\cdot~),$ \\
where $P_h:~W\rightarrow W_h$ is the $L^2$-projection defined by
$$(\phi-P_h\phi,w_h)=0~~\forall~w_h\in W_h,~\phi\in W.$$
Further, we assume that the finite element spaces satisfy the following
properties:
$$
\|\bv-\Pi_h\bv\| \le C_I h^{r}\|\bv\|_{r},\;\; 1 \leq r \leq \ell+1, ~~~~\|w-P_h w\|
\le C_I h^{r}\|w\|_{r},\;\;\; 0\leq r \leq \ell+1.
$$
\noindent Note that for $\bv \in \bH(div,\Omega)$ and $ w \in L^2(\Omega),$
the following properties hold true:
\begin{equation*}\label{aprx1}
 (\nabla\cdot(\bv-\Pi_h\bv),w_h)=0,~~w_h\in W_h;~~~(w-P_h w,\nabla\cdot\bv_h)=0,
 ~~\bv_h \in \bV_h.
\end{equation*}
Examples of spaces satisfying the above can be found in \cite {BF-91}.

To prove the main theorem of this section, we need the following  {\it a posteriori} estimates
of $\eta_{u}, \eta_{u,t}$ and $\eta_{\bs}$ related to the mixed
elliptic reconstructions (\ref{ip1})-(\ref{ip2}). For a proof, see \cite{CC-97}.
\begin{lemma}\label{negative}
For Raviart-Thomas-N\'ed\'elec elements, there exists a positive constant
$C$ which depends only on the coefficient matrix $A,$ the domain $\Omega,$
the shape regularity of the elements and polynomial degree $ \ell $ such that for
$ \ell =0,1,$
\begin{equation}\label{mer-u}
\|\eta_u\|  \leq  C \Big( \|h^{\ell+1} \br_2 \| + \min_{w_h\in W_h}
\|h (\alpha \bs_h -\nabla_h w_h)\| \Big),
\end{equation}
and for $j=1,2$,
\begin{equation} \label{mer-ut}
\left\|\frac{\partial^j\eta_{u}}{\partial t^j}\right\|  \leq  C
\left( \left\|h^{\ell+1} \frac{\partial^j\br_{2}}{\partial t^j}\right \|
+ \min_{w_h\in W_h}\left\|h \left(\alpha \frac{\partial^j\bs_h}{\partial t^j} -
\nabla_h w_h \right)\right\| \right),
\end{equation}
and
\begin{equation} \label{mer-bs}
\|\alpha^{1/2}\eta_{\bs}\| \leq
C \left( \|h \br_2 \| + \| h^{1/2} J(\alpha \bs_h\cdot
{\bf t})\|_{0,\Gamma_h} + \|h \,{\mbox {curl}}_h\,(\alpha \bs_h)\| \right),
\end{equation}
\end{lemma}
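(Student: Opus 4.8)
The plan is to prove Lemma~\ref{negative} by recognizing that the mixed elliptic reconstruction equations (\ref{ip1})--(\ref{ip2}) define, at each fixed time $t$, a standard mixed elliptic problem whose data are precisely the computable residuals $\br_1$ and $\br_2$. Concretely, the pair $(\tilde u-u_h,\tilde\bs-\bs_h)$ solves a dual mixed formulation of a Poisson-type problem $-\nabla\cdot(A\nabla\eta_u)=\br_2$-type source with $\eta_{\bs}=-A\nabla\eta_u+\text{residual correction}$, where $\eta_u=\tilde u-u_h$ and $\eta_{\bs}=\tilde\bs-\bs_h$. The central observation is that $\bs_h$ is the mixed (Raviart--Thomas) projection of $\tilde\bs$ and $u_h$ of $\tilde u$, so the reconstruction error is exactly the discretization error of a mixed finite element method, and the three bounds (\ref{mer-u})--(\ref{mer-bs}) are nothing but the standard residual-based \emph{a posteriori} error estimates for mixed methods applied to this auxiliary problem. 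The estimates (\ref{mer-ut}) for the time derivatives follow by differentiating (\ref{ip1})--(\ref{ip2}) in $t$; since the elliptic operator is time-independent, $\partial_t^j\eta_u$ and $\partial_t^j\eta_{\bs}$ solve the same mixed system with data $\partial_t^j\br_2$ and $\partial_t^j(\alpha\bs_h)$, so the same estimator applies verbatim.

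First I would set up the auxiliary continuous problem: let $(\hat u,\hat\bs)$ solve the continuous mixed problem with the reconstruction right-hand sides, and recognize via the definitions that $\eta_u$ and $\eta_{\bs}$ measure the gap between this continuous solution and its mixed projection. I would then invoke the \emph{a posteriori} error estimation machinery for Raviart--Thomas--N\'ed\'elec mixed methods developed in Carstensen~\cite{CC-97}. The $L^2$-estimate (\ref{mer-u}) for the scalar variable uses a duality (Aubin--Nitsche type) argument, which produces the extra power $h^{\ell+1}$ multiplying $\br_2$ together with the oscillation term $\min_{w_h}\|h(\alpha\bs_h-\nabla_h w_h)\|$ that measures how far the discrete flux is from being a gradient. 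The flux estimate (\ref{mer-bs}) is the more classical residual estimator: it is obtained via a Helmholtz-type decomposition of $\alpha^{1/2}\eta_{\bs}$ into a gradient part controlled by $\|h\br_2\|$ and a curl part controlled by the element residual $\|h\,\mathrm{curl}_h(\alpha\bs_h)\|$ plus the tangential jump term $\|h^{1/2}J(\alpha\bs_h\cdot\mathbf{t})\|_{0,\Gamma_h}$ across interior edges.

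The main obstacle I anticipate is making the Helmholtz decomposition rigorous on the polygonal domain $\Omega$ and correctly identifying which residual terms appear. Because $\tilde\bs$ satisfies $\alpha\tilde\bs=-\nabla\tilde u$ (equation (\ref{tilde-bs})), the quantity $\mathrm{curl}(\alpha\tilde\bs)$ vanishes, so the computable curl-residual and tangential-jump terms of $\alpha\bs_h$ are exactly what control the distance of $\alpha\bs_h$ from being curl-free; establishing this requires an edge-wise integration by parts and a Scott--Zhang or Clément interpolation estimate to absorb the nonconformity, which is where the factors $h$ and $h^{1/2}$ and the shape-regularity constant $C$ enter. The duality argument for (\ref{mer-u}) is delicate precisely because full elliptic regularity may fail on a general polygon; one must either assume convexity or track the regularity index carefully, which is why the statement is restricted to the low-order cases $\ell=0,1$. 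Since the paper explicitly defers these technicalities to~\cite{CC-97}, my proof would consist of reducing (\ref{ip1})--(\ref{ip2}) to the standard mixed problem, verifying the projection property (already noted after (\ref{ip2})), and then citing the corresponding residual estimators, with the time-derivative versions following by the translation-invariance of the spatial operator.
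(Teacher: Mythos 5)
Your proposal is correct and follows essentially the same route as the paper: the paper itself gives no independent proof but, exactly as you do, identifies $u_h$ and $\bs_h$ as the mixed elliptic projections of $\tilde u$ and $\tilde\bs$, so that $\eta_u$ and $\eta_{\bs}$ are the errors of a standard mixed finite element discretization of the auxiliary elliptic problem, and then cites Carstensen~\cite{CC-97} for the residual-based estimators (\ref{mer-u})--(\ref{mer-bs}), with the time-derivative bounds (\ref{mer-ut}) obtained by differentiating (\ref{ip1})--(\ref{ip2}) in $t$. Your additional commentary on the duality argument and the Helmholtz decomposition correctly describes the content of the cited result rather than departing from it.
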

\noindent
where $\br_2= (u_{h,tt}-f+ \nabla\cdot \bs_h) $ is a residual and  $J(\alpha \bs_h \cdot {\bf t})$ denotes the jump of
$\;\alpha \bs_h \cdot {\bf t}\; $ across element edge ${{E}}$ with ${\bf t}$ being
the tangential unit vector along the edge ${{E}} \in \Gamma_h.$

Now, let ${\mathcal  E}_1(\br_2,\bs_h;{\mathcal  T}_h)$,
${\mathcal  E}_1(\frac{\partial^j\br_{2}}{\partial t^j},\frac{\partial^j\bs_h}{\partial t^j};{\mathcal  T}_h)$
and ${\mathcal  E}_2(\br_2,\bs_h;{\mathcal  T}_h)$ denote the terms on the right-hand sides of
(\ref{mer-u}), (\ref{mer-ut}) and (\ref{mer-bs}), respectively. Then,
using Lemmas \ref{apriori}-\ref{negative}, we finally obtain the main theorem of this section as:
\begin{theorem}
Let $(u,\bs)$ be a solution of the mixed formulation $(\ref{bsmf})$-$(\ref{umf})$
and let $(u_h,\bs_h)$ be a solution of the semidiscrete mixed formulation
 $(\ref{dbsmf})$-$(\ref{dumf})$.  Then the following a posteriori estimates hold
for $\ell=0,1$:
\begin{eqnarray*} \label{error-u}
\|e_{u,t}\|_{L^\infty(0,T;L^2(\Omega))}&+& \|\alpha^{1/2}e_{\bs}
\|_{L^\infty(0,T;L^2(\Omega))}\nonumber\\
&\lesssim&
\|e_{u,t}(0)\| + \|\alpha^{1/2}e_{\bs}(0)\|
+{\mathcal  E}_1(\br_{2,t}(0),\bs_{h,t}(0);{\mathcal  T}_h)\nonumber\\
&&+{\mathcal  E}_2(\br_2(0),\bs_h(0);{\mathcal  T}_h)+\|{\mathcal  E}_1(\br_{2,t},\bs_{h,t};
{\mathcal  T}_h)\|_{L^\infty(0,T)}\nonumber\\
&&+\|{\mathcal  E}_2(\br_{2},\bs_{h};{\mathcal  T}_h)\|_{L^\infty(0,T)}
 + \int_0^T{\mathcal  E}_1(\br_{2,tt},\bs_{h,tt};{\mathcal  T}_h)\,ds,
 \end{eqnarray*}
and
\begin{eqnarray*} \label{error-bsL2}
\|e_u\|_{L^\infty(0,T;L^2(\Omega))} &\lesssim&
\|e_{u}(0)\|+{\mathcal  E}_1(\br_2(0),\bs_h(0);{\mathcal  T}_h)\nonumber\\
&&+\|{\mathcal  E}_1(\br_{2},\bs_{h};{\mathcal  T}_h)\|_{L^\infty(0,T)}
+\int_0^T{\mathcal  E}_1(\br_{2,t},\bs_{h,t};{\mathcal  T}_h)\,ds.
\end{eqnarray*}
\end{theorem}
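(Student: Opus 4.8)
The plan is to assemble both target estimates entirely from Lemma~\ref{apriori} and Lemma~\ref{negative} via the splitting $e_u=\xi_u-\eta_u$ and $\es=\xibs-\eta_{\bs}$ introduced before (\ref{xi1})--(\ref{xi0}). The guiding principle is that the component $(\xiut,\xibs)$ measures the distance from the exact pair to the elliptic reconstruction and is controlled by the energy-type bounds of Lemma~\ref{apriori}, whereas the component $(\eta_u,\eta_{\bs})$ measures the distance from the reconstruction to the discrete solution and is fully computable through the a posteriori bounds (\ref{mer-u})--(\ref{mer-bs}). By the triangle inequality I would first bound
$$\|e_{u,t}(t)\|+\|\alpha^{1/2}\es(t)\| \le \big(\|\xiut(t)\|+\|\alpha^{1/2}\xibs(t)\|\big) + \big(\|\eta_{u,t}(t)\|+\|\alpha^{1/2}\eta_{\bs}(t)\|\big),$$
and similarly $\|e_u(t)\|\le\|\xiu(t)\|+\|\eta_u(t)\|$, so that each piece is handled by the appropriate lemma.

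For the first estimate I would invoke (\ref{esthx}) to control the $\xi$ part by its initial value plus $2\int_0^t\|\eta_{u,tt}(s)\|\,ds$. The two remaining tasks are to convert the initial-data contribution and to bound the residual integral. Since $\xiut(0)=e_{u,t}(0)+\eta_{u,t}(0)$ and $\xibs(0)=\es(0)+\eta_{\bs}(0)$, I would apply (\ref{mer-ut}) with $j=1$ and (\ref{mer-bs}), both evaluated at $t=0$, to replace $\eta_{u,t}(0)$ and $\alpha^{1/2}\eta_{\bs}(0)$ by ${\mathcal E}_1(\br_{2,t}(0),\bs_{h,t}(0);{\mathcal T}_h)$ and ${\mathcal E}_2(\br_2(0),\bs_h(0);{\mathcal T}_h)$, respectively, keeping the genuine data errors $\|e_{u,t}(0)\|$ and $\|\alpha^{1/2}\es(0)\|$ as the first two terms of the bound. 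For the integral, (\ref{mer-ut}) with $j=2$ gives $\|\eta_{u,tt}(s)\|\le C\,{\mathcal E}_1(\br_{2,tt},\bs_{h,tt};{\mathcal T}_h)(s)$, and enlarging the interval to $[0,T]$ produces $\int_0^T{\mathcal E}_1(\br_{2,tt},\bs_{h,tt};{\mathcal T}_h)\,ds$.

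The $\eta$ part evaluated at the running time $t$ is handled pointwise: (\ref{mer-ut}) with $j=1$ gives $\|\eta_{u,t}(t)\|\le C\,{\mathcal E}_1(\br_{2,t},\bs_{h,t};{\mathcal T}_h)(t)$ and (\ref{mer-bs}) gives $\|\alpha^{1/2}\eta_{\bs}(t)\|\le C\,{\mathcal E}_2(\br_2,\bs_h;{\mathcal T}_h)(t)$; taking the supremum over $(0,T)$ yields the two $L^\infty(0,T)$ estimator norms. Combining these pieces and taking the maximum over $t\in[0,T]$ on the left delivers the first asserted inequality. The second estimate follows the same pattern: the $\xiu$ bound from Lemma~\ref{apriori} supplies $\|\xiu(0)\|+2\int_0^t\|\eta_{u,t}(s)\|\,ds$; I would write $\xiu(0)=e_u(0)+\eta_u(0)$ and bound $\eta_u(0)$ by ${\mathcal E}_1(\br_2(0),\bs_h(0);{\mathcal T}_h)$ via (\ref{mer-u}), bound the integral by $\int_0^T{\mathcal E}_1(\br_{2,t},\bs_{h,t};{\mathcal T}_h)\,ds$ via (\ref{mer-ut}) with $j=1$, and bound the remaining $\|\eta_u(t)\|$ by $\|{\mathcal E}_1(\br_2,\bs_h;{\mathcal T}_h)\|_{L^\infty(0,T)}$ via (\ref{mer-u}).

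Since the substantive analysis, namely the energy/integration argument and the residual-based a posteriori control of the reconstruction error, is already carried out in the two lemmas, the only delicate point here is bookkeeping: matching each differentiation order of $\eta$ to the correct estimator (an $\eta_{u,t}$ forces $\br_{2,t}$ and $\bs_{h,t}$, an $\eta_{u,tt}$ forces the second time derivatives, and so on) and correctly re-expressing the reconstruction initial values $\xi(0)$ in terms of the prescribed discrete data errors $e(0)$ together with the computable $\eta(0)$ terms. I therefore expect no real obstacle beyond verifying that $u_h,\bs_h$ are smooth enough in time for $\br_{2,t},\br_{2,tt}$ to be meaningful, which holds because the semidiscrete system is a linear system of ODEs with smooth data, so that the reconstructions are smooth in time as assumed after (\ref{ip2}).
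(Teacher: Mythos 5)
Your proposal is correct and is precisely the argument the paper intends: the paper gives no written proof beyond the remark that the theorem follows from Lemmas~3.1 and~3.2, and your assembly via the triangle inequality on the splittings $e_u=\xi_u-\eta_u$, $\es=\xibs-\eta_{\bs}$, with the initial $\xi$-terms re-expressed through $e(0)$ and $\eta(0)$ and each $\eta$-derivative matched to the corresponding estimator from Lemma~3.2, is exactly the intended bookkeeping.
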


\section {\bf Completely  discrete scheme} \label{sec:4}
\se
This section deals with {\it a posteriori} analysis for a completely
discrete mixed approximation based on  backward differencing.

Let $0=t_0< t_1 < \ldots < t_N=T,$ $I_n = (t_{n-1}, t_n]$ and
$k_n =t_{n}-t_{n-1}.$ For  $n \in [0:N],$ let ${\mathcal {T}}_n$ be a refinement of
a macro-triangulation which is a triangulation of the domain $\Omega $  that
satisfies the same conformity and shape
regularity assumptions  made on its refinements. Let
$$h_n(x):= {\mbox {diam} }\;(K),\;\;{\mbox{ where }}
\; K \in {\mathcal  { T }}_n \;{\mbox {and }}\; x \in K, $$
for all $x\in \Omega.$ Given two compatible triangulations ${\mathcal  {T}}_{n-1}$
and $ {\mathcal  { T }}_n,$ i.e., they are refinements of the same
macro-triangulation, let $\hat{\mathcal  { T}}_n$ be the finest common
coarsening of  $ {\mathcal  { T }}_n$ and  ${\mathcal  {T}}_{n-1},$ whose meshsize is
given by $\hat{h}_n := \max ( h_n, h_{n-1}),$  see (\cite{LM-06}, pp. 1655).

We consider $ \bV^n_h $ and $ W^n_h $
defined over the triangulations  ${\mathcal {T}}^n $ as Raviart-Thomas
finite element spaces of index $\ell \geq 0$ of $ \bH(div,\Omega)$
and  $ L^2(\Omega),$ respectively. Let $P_h^n: L^2(\Omega)\longrightarrow W^n_h $ be the
$ L^2 $-projection defined by
$$ (P_h^n w, \phi^n) = ( w, \phi^n)\;\;\; \forall \phi^n \in W_h^n.$$
Given $U^0=P_h^0 u_0$, find $\{(U^n, \bS^n)\}$ with $ (U^n, \bS^n)
\in W_h^n \times \bV_h^n $ for $n \in [1:N]$ such that
\begin{eqnarray} \label{bS-U0}
(\partial_t^2 U^n, w) +(\nabla \cdot \bS^n, w)&=& (\bar{f}^n, w)\;\;
\forall w \in W_h^n,\\
(\alpha \bS^n, \bv) - (U^n, \nabla\cdot \bv)&=& 0 \;\;\; \forall \bv \in \bV_h^n,
\label{bS-U1}
\end{eqnarray}
where $\bar{f}^n$ is either chosen as point-wise value  $\bar f^n(\cdot) :=f(t_n,\cdot)$ for $n\geq 0$ or the one through average value as in Remark \ref{remark4.2}. Here,  the backward second and first finite differences are given, respectively, by
\begin{equation*}
\partial_t^2 U^n=\frac{\partial_t U^n-\partial_t U^{n-1}}{k_n},
\end{equation*}
and
$$\partial_t U^n :=\left\{\begin{array}{ll}\displaystyle
\frac{U^n- U^{n-1}}{k_n}, & \mbox{ for } n=1,\cdots, N,\\
P_h^0u_1, & \mbox{ for } n=0.\end{array}\right.
$$
Throughout the rest of the paper, we shall use the following notation:
$$
P_h^n(\partial_t \phi^n)
=\frac{1}{k_n}(\phi^n-P_h^n \phi^{n-1}).
$$
Following \cite{GLM-2013},  we define for given a sequence of discrete values $\{V^n\}_{n=0}^N,$
the time reconstruction $V:[0,T]\times\Omega\rightarrow \R$ or $\R^2$ as
\begin{equation}\label{U-interpolant}
V(t)= V^{n}+ (t-t_n) \partial_tV^{n}
-\frac{(t-t_{n-1})(t_n-t)^2}{k_n}\partial_t^2V^n,
\;\;\; t_{n-1} < t \leq t_n,\\
\end{equation}
for $n=1,\cdots, N$. Note that we have used the fact that $\partial_tV^0$ is well defined.

We shall use the above $C^1$-function $V(t)$ such that for $n=0,1,\cdots, N$,
\begin{equation}\label{C1}
V(t_n)=V^n,\qquad V_t(t_n)=\partial_tV^n,\qquad V_{tt}(t)=(1+\mu^n)\partial_t^2V^n,
\end{equation}
for $t\in(t_{n-1},t_n]$, where 
$$\mu^n(t):=-6k_n^{-1}(t-t_{n-1/2}).$$

Similarly, we define $C^1$-functions $U(t)$ and $\bS(t)$ in time variable using the discrete sequences
$\{U^n\}_{n=0}^N$ and $\{\bS^n\}_{n=0}^N$, respectively.

As in Section 3, for given $\{U^n,\bS^n\}_{n=0}^N$, we now define the
 mixed elliptic reconstructions  $\tilde u^n \in H^1_0(\Omega) $ and $\tilde\bs^n \in \bV $ at
$t=t_n$ as:
\begin{eqnarray}
 (\nabla\cdot(\tilde\bs^n-\bS^n),w)=-\br_2^n(w),~~~w\in W,
 \label{mer1}\\
 (\alpha(\tilde\bs^n- \bS^n),\bv)-(\tilde u^n-U^n ,\nabla\cdot\bv)
= -\br_1^n(\bv),~~~ \bv\in \bV, \label{mer2}
\end{eqnarray}
where $\br_1^n(\bv):=(\alpha\bS^n,\bv)-(U^n,\nabla\cdot\bv)$ and 
$\br_2^n(w):=(P^n_h(\partial_t^2U^n),w)+(\nabla\cdot\bS^n,w)-(\bar{f}^n,w).$

Since $\br_1^n(\bv_h)=0\;\;\forall \bv_h \in \bV_h^n,$ $n\geq 0$ and $\br_2^n(w_h)=0 \;\;
\forall w_h \in W_h^n,$ $n\geq 1$, in fact, $\bS^n$ and $U^n$ are mixed elliptic
projections of $\tilde\bs^n $ and $\tilde u^n $ at time $t=t_n,$
respectively. Now given $\{\tilde{u}^n\}_{n=0}^N$ and $\{\tilde{\bs}^n\}_{n=0}^N$, we define
the $C^1$-functions $\tilde{u}(t)$ and $\tilde{\bs}(t)$ in time $t\in (0,T]$, respectively, as
\begin{equation}\label{U-interpolant1}
\tilde{u}(t)=\tilde{u}^{n}+ (t-t_{n})\partial_t\tilde{u}^{n}
-\frac{(t-t_{n-1})(t_n-t)^2}{k_n}\partial_t^2\tilde{u}^n,
\;\;\; t_{n-1} < t \leq t_n,
\end{equation}
and
\begin{equation}\label{bS-interpolant1}
\tilde{\bs}(t)= \tilde{\bs}^{n}+ (t-t_n)\partial\tilde{\bs}^{n}
-\frac{(t-t_{n-1})(t_n-t)^2}{k_n}\partial_t^2\tilde{\bs}^n,
\;\;\; t_{n-1} < t \leq t_n,
\end{equation}
provided that $\partial_t\tilde{u}^0$ and $\partial_t\tilde{\bs}^0$ are well defined.

For $t\in (0,T]$, the mixed elliptic reconstruction $\{\tilde{u},\tilde{\bs}\}$ satisfies
\begin{eqnarray}
 (\nabla\cdot(\tilde\bs-\bS),w)=-\br_2 (w),~~~w\in W,
 \label{mer3}\\
 (\alpha(\tilde\bs- \bS),\bv)-(\tilde u-U ,\nabla\cdot\bv)
= -\br_1 (\bv),~~~
\bv\in \bV, \label{mer4}
\end{eqnarray}
where  $ \br_1 $ and $ \br_2 $ are defined as $C^1$-functions in time using $\{\br_1^n,\br_2^n\}_{n=1}^{N}$ as in (\ref{U-interpolant}).

Again, set
\begin{equation*}\label{eu}
e_u = (\tilde{u}-u)-(\tilde{u} -U)=: \xi_u-\eta_u,
\end{equation*}
and
\begin{equation*}\label{es}
\es =  (\tilde\bs -\bs)-(\tilde\bs-\bS)=: \xi_{\bs}-\eta_{\bs}.
\end{equation*}
%
%
Now, the pair $\{e_u,\es\}$ satisfies
\begin{equation}\label{3.14}
(e_{u,tt},w)+(\nabla\cdot\es,w) = (U_{tt},w)+(\nabla\cdot\bS,w)-(f,w).
\end{equation}
On splitting $e_u$ and $\es$, we obtain from (\ref{3.14})
\begin{eqnarray}\label{error-3}
(\xi_{u,tt},w)+(\nabla\cdot \xi_{\bs},w) &=& (\eta_{u,tt},w)+((I-P^n_h)U_{tt},w)+
\mu^n(t)(\partial_t^2U^n,P^n_hw)\\
&&+(\nabla\cdot (\tilde{\bs}-\tilde{\bs}^n), w) + (\bar{f}^n-f,w)\;\;\;\forall w\in W. \nonumber
\end{eqnarray}
Similarly, we also arrive at
\begin{eqnarray}\label{error-4}
(\alpha \xi_{\bs},\bv)-(\xi_u, \nabla\cdot \bv)&=&
( \alpha (\tilde{\bs}-\tilde{\bs}^n),\bv)
- (\tilde{u}-\tilde{u}^n,\nabla\cdot \bv)\;\;\; \forall \bv\in \bV.
\end{eqnarray}
Note that
\begin{equation}\label{3.17}
\tilde{\bs}-\tilde{\bs}^n= (t-t_n)  \partial_t\tilde{\bs}^n
+\left(k_n^{-1}(t_n-t)^3-(t_n-t)^2\right)\partial_t^2\tilde{\bs}^n,
\end{equation}
and
\begin{equation}\label{3.18}
\tilde{u}-\tilde{u}^n= (t-t_n)   \partial_t\tilde{u}^n
+\left(k_n^{-1}(t_n-t)^3-(t_n-t)^2\right)\partial_t^2\tilde{u}^n.
\end{equation}
Now, it follows that
\begin{eqnarray}\label{3.19}
( \alpha (\tilde{\bs}&-&\tilde{\bs}^n),\bv)
- (\tilde{u}-\tilde{u}^n,\nabla\cdot \bv)= (t-t_n) 
\Big\{( \alpha \partial_t \tilde{\bs}^n, \bv)
 - ( \partial_t \tilde{u}^n , \nabla  \cdot \bv)\Big\}\\
 &&+\left(k_n^{-1}(t_n-t)-(t_n-t)^2\right)
 \Big\{( \alpha \partial_t^2 \tilde{\bs}^n, \bv)
 - ( \partial_t^2 \tilde{u}^n , \nabla  \cdot \bv)\Big\},\nonumber
\end{eqnarray}
and from (\ref{mer1}) with definition of $\br_1(\bv)$, we find that
\begin{equation}\label{3.20}
( \alpha\tilde{\bs}^n,\bv)-(\tilde{u}^n, \nabla\cdot \bv) = 0 \;\; \forall \bv \in \bV.
\end{equation}
From (\ref{3.20}), the equation (\ref{3.19}) takes the form
\begin{equation*}\label{mer-property}
(\alpha(\tilde{\bs}-\tilde{\bs}^n),\bv)-(\tilde{u}-\tilde{u}^n, \nabla\cdot \bv) =0,
\end{equation*}
and thus,  (\ref{error-4}) becomes
\begin{equation}\label{error-4i}
(\alpha \xi_{\bs},\bv)-(\xi_u, \nabla\cdot \bv)= 0 \;\;\; \forall \bv\in \bV.
\end{equation}

\begin{theorem}\label{xius-thm}
Let $(u,\bs)$ and $(U,\bS)$ be the solution of $(\ref{bsmf})$-$(\ref{umf})$  and 
$(\ref{bS-U0})$-$(\ref{bS-U1}),$ respectively. Then, the
following estimates hold for $t\in (t_{n-1},t_n]$
$$
\|\xi_{u,t}(t)\|+\|\alpha^{1/2}\xi_{\bs}(t)\|\leq\|\xi_{u,t}(0)\|+\|\alpha^{1/2}\xi_{\bs}(0)\|
+2\sum_{j=1}^4{\mathcal  E}_{1,j}(t)+2\int_0^t\|\eta_{u,tt}(s)\|\,ds,
$$
where 
\begin{eqnarray*}
{\mathcal  E}_{1,1}(t)&=&\sum_{j=1}^{n-1}\int_{t_{j-1}}^{t_j}||(I-P_h^j)U_{tt}||\,ds
+\int_{t_{n-1}}^{t}||(I-P_h^n)U_{tt}||\,ds,\nonumber\\
{\mathcal  E}_{1,2}(t)&=& \sum_{j=1}^{n-1}\int_{t_{j-1}}^{t_j}||\mu^j\partial_t^2 U^j||\,ds
+\int_{t_{n-1}}^{t}||\mu^n\partial_t^2 U^n||\,ds,\nonumber\\
{\mathcal  E}_{1,3}(t)&=&\sum_{j=1}^{n-1}
\left(\frac{k_j^2}{2}||\partial_t(\br_2^j-\nabla\cdot\bS^j)||+
\frac{k_j^3}{12}||\partial_t^2(\br_2^j-\nabla\cdot\bS^j)||\right)\nonumber\\
&&+\int_{t_{n-1}}^{t}(t_n-s)||\partial_t(\br_2^n-\nabla\cdot\bS^n)||\,ds\nonumber\\
&&+\int_{t_{n-1}}^{t}\left((t_n-s)^2-\frac{(t_n-s)^3}{k_n}\right)
||\partial_t^2(\br_2^n-\nabla\cdot\bS^n)||\,ds,\\
{\mathcal  E}_{1,4}(t)&=&\sum_{j=1}^{n-1}\int_{t_{j-1}}^{t_j}||\bar{f}^j-f||\,ds+
\int_{t_{n-1}}^{t}||\bar{f}^n-f||\,ds.
\end{eqnarray*}
\end{theorem}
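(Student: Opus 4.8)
The plan is to reproduce the energy argument of Lemma~\ref{apriori}, now carrying along the four extra source terms created by the time discretization. First I would differentiate the reduced identity (\ref{error-4i}) in time (legitimate since $\tilde\bs,\tilde u$ and the discrete interpolants are $C^1$ in $t$ and $\alpha$ is time independent) and test with $\bv=\xibs$, giving $(\alpha\xi_{\bs,t},\xibs)-(\xiut,\nabla\cdot\xibs)=0$. Then I would test (\ref{error-3}) with $w=\xiut$. Adding the two identities cancels the cross term $(\nabla\cdot\xibs,\xiut)$ and yields the energy identity
\begin{equation*}
\frac12\frac{d}{dt}\Big(\|\xiut\|^2+\|\alpha^{1/2}\xibs\|^2\Big)=(\eta_{u,tt},\xiut)+((I-P_h^n)U_{tt},\xiut)+\mu^n(t)(\partial_t^2U^n,P_h^n\xiut)+(\nabla\cdot(\tilde\bs-\tilde\bs^n),\xiut)+(\bar f^n-f,\xiut),
\end{equation*}
which is the fully discrete analogue of (\ref{07}).

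Next I would bound the five terms on the right, each written as $(g,\xiut)$ and controlled by $\|g\|\,\|\xiut\|$. The first gives the $\|\eta_{u,tt}\|$ contribution, and the second and fifth give $\|(I-P_h^n)U_{tt}\|\,\|\xiut\|$ and $\|\bar f^n-f\|\,\|\xiut\|$. For the third, self-adjointness of $P_h^n$ gives $(\partial_t^2U^n,P_h^n\xiut)=(P_h^n\partial_t^2U^n,\xiut)$, and the contraction property $\|P_h^n\partial_t^2U^n\|\le\|\partial_t^2U^n\|$ produces $\|\mu^n\partial_t^2U^n\|\,\|\xiut\|$. The fourth term is the crux: from the expansion (\ref{3.17}) and the reconstruction equation (\ref{mer1}), which gives $\nabla\cdot\tilde\bs^n=-(\br_2^n-\nabla\cdot\bS^n)$, I would commute $\nabla\cdot$ past the backward differences $\partial_t,\partial_t^2$ to rewrite $\nabla\cdot(\tilde\bs-\tilde\bs^n)$ as $-(t-t_n)\,\partial_t(\br_2^n-\nabla\cdot\bS^n)-(k_n^{-1}(t_n-t)^3-(t_n-t)^2)\,\partial_t^2(\br_2^n-\nabla\cdot\bS^n)$, whose time weights $(t_n-t)$ and $(t_n-t)^2-k_n^{-1}(t_n-t)^3$ are nonnegative on $(t_{n-1},t_n]$.

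Having the identity, I would integrate from $0$ to $t$. Because $P_h^n,\mu^n,\bar f^n,\bS^n$ change across intervals, the integral must split as $\int_0^t=\sum_{j=1}^{n-1}\int_{t_{j-1}}^{t_j}+\int_{t_{n-1}}^{t}$; on each completed subinterval the discrete-difference norms are constant in $t$, so elementary integrals such as $\int_{t_{j-1}}^{t_j}(t_j-s)\,ds=k_j^2/2$ and $\int_{t_{j-1}}^{t_j}\big((t_j-s)^2-k_j^{-1}(t_j-s)^3\big)\,ds=k_j^3/12$ collapse to the closed-form coefficients of $\mathcal E_{1,3}(t)$, while the partial last interval keeps the integral form; the same split produces $\mathcal E_{1,1},\mathcal E_{1,2},\mathcal E_{1,4}$ from the other terms. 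Finally, exactly as in Lemma~\ref{apriori}, I would set $|||(\xiut,\xibs)(t)|||=(\|\alpha^{1/2}\xibs\|^2+\|\xiut\|^2)^{1/2}$, choose $t^\ast\in[0,t]$ realizing its maximum, bound $\|\xiut(s)\|\le|||(\xiut,\xibs)(t^\ast)|||$ inside every integral and use $\int_0^{t^\ast}\le\int_0^t$ (nonnegative integrands), and solve the quadratic inequality $a^2\le b^2+2aC$ to get $a\le b+2C$; returning from $|||\cdot|||$ to the sum of the two norms gives the claimed bound.

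The main obstacle is the bookkeeping for the fourth term: correctly reading off $\nabla\cdot\tilde\bs^n$ from (\ref{mer1}), commuting the divergence through the backward differences, and evaluating the interval integrals so that the weights collapse precisely to $k_j^2/2$ and $k_j^3/12$. The secondary care needed is to honour the piecewise-in-time structure of the scheme when integrating, so that the sums over completed intervals and the integral over the current one separate cleanly into $\mathcal E_{1,1},\dots,\mathcal E_{1,4}$; the energy and maximum-time machinery is otherwise identical to the semidiscrete case.
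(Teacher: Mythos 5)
Your proposal is correct and follows essentially the same route as the paper: differentiate (\ref{error-4i}) in time, test with $\xibs$ and test (\ref{error-3}) with $\xi_{u,t}$, add, integrate over $(0,t]$ split into completed subintervals plus the current one, rewrite $\nabla\cdot(\tilde\bs-\tilde\bs^n)$ via (\ref{3.17}) and (\ref{mer1})--(\ref{3.29}) in terms of $\partial_t^j(\br_2^n-\nabla\cdot\bS^n)$, and close with the maximum-in-time energy argument, with the weights integrating to $k_j^2/2$ and $k_j^3/12$ exactly as in the paper. The only (immaterial) difference is that you bound the right-hand side terms pointwise in time before integrating, whereas the paper integrates first and then applies Cauchy--Schwarz to the accumulated terms $J_{1,1}^n,\dots,J_{1,4}^n$.
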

\noindent
{\it Proof }. Differentiate (\ref{error-4i}) with respect to $t$. Then, choose $v=\xi_{\bs}$ in the 
resulting equation and $w=\xi_{u,t}$ in (\ref{error-3}) to obtain for $t\in(t_{n-1},t_n]$
\begin{eqnarray*}\label{3.25}
\frac{1}{2}\left(\|\xi_{u,t}\|^2+\|\alpha^{1/2}\xi_{\bs}\|^2\right)
&=&
(\eta_{u,tt},\xi_{u,t})+((I-P^n_h)U_{tt},\xi_{u,t})\\
&&+
\mu^n(t)(\partial_t^2U^n,P^n_h\xi_{u,t})\\
&&+(\nabla\cdot (\tilde{\bs}-\tilde{\bs}^n), \xi_{u,t}) + (\bar{f}^n-f,\xi_{u,t}).\nonumber
\end{eqnarray*}
On integrating from $0$ to $t$ with $t\in(t_{n-1},t_n]$, we find that
\begin{eqnarray}\label{3.26}
\frac{1}{2}\left(\|\xi_{u,t}\|^2+\|\alpha^{1/2}\xi_{\bs}\|^2\right)
&=&\frac{1}{2}\left(\|\xi_{u,t}(0)\|^2+\|\alpha^{1/2}\xi_{\bs}(0)\|^2\right)
+\int_0^t(\eta_{u,tt},\xi_{u,t})\,ds\\
&&+J_{1,1}^n(\xi_{u,t})+J_{1,2}^n(\xi_{u,t})+J_{1,3}^n(\xi_{u,t})+J_{1,4}^n(\xi_{u,t}),\nonumber
\end{eqnarray}
where
\begin{eqnarray*}
J_{1,1}^n(\xi_{u,t})&:=&\sum_{j=1}^{n-1}\int_{t_{j-1}}^{t_j}\left((I-P_h^j)U_{tt},\xi_{u,t}\right)ds
+\int_{t_{n-1}}^{t}\left((I-P_h^n)U_{tt},\xi_{u,t}\right)ds,\\
J_{1,2}^n(\xi_{u,t})&:=&\sum_{j=1}^{n-1}\int_{t_{j-1}}^{t_j}\mu^j(\partial_t^2U^j,P_h^j\xi_{u,t})\,ds
+\int_{t_{n-1}}^{t}\mu^n(\partial_t^2U^n,P_h^n\xi_{u,t})\,ds,\\
J_{1,3}^n(\xi_{u,t})&:=&\sum_{j=1}^{n-1}\int_{t_{j-1}}^{t_j}(\nabla\cdot(\tilde{\bs}-\tilde{\bs}^j),\xi_{u,t})\,ds
+\int_{t_{n-1}}^{t}(\nabla\cdot(\tilde{\bs}-\tilde{\bs}^n),\xi_{u,t})\,ds,
\end{eqnarray*}
and
$$
J_{1,4}^n(\xi_{u,t}):=\sum_{j=1}^{n-1}\int_{t_{j-1}}^{t_j}(\bar{f}^j-f,\xi_{u,t})\,ds
+\int_{t_{n-1}}^{t}(\bar{f}^n-f,\xi_{u,t})\,ds.
$$
Set
$$
E_1^2(t):=\|\xiut(t)\|^2+\|\alpha^{1/2}\xibs(t)\|^2,
$$
and let at $t=t^* \in (0,t]$ be such that
$$
E_1(t^*) =\displaystyle \max_{0 \leq s \leq t} E_1(s).
$$
Now, a use of the Cauchy-Schwarz inequality yields
\begin{equation*}\label{3.27}
|J_{1,1}^n(\xi_{u,t})|\leq\left(\sum_{j=1}^{n-1}\int_{t_{j-1}}^{t_j}\|(I-P_h^j)U_{tt}\|\,ds
+\int_{t_{n-1}}^{t}\|(I-P_h^n)U_{tt}\|\,ds\right)E_1(t^*).
\end{equation*}
and  similarly,
$$
|J_{1,2}^n(\xi_{u,t})|\leq\left(\sum_{j=1}^{n-1}\int_{t_{j-1}}^{t_j}||\mu^j\partial_t^2 U^j||\,ds
+\int_{t_{n-1}}^{t}||\mu^n\partial_t^2 U^n||\,ds\right)
E_1(t^*).
$$
For $J_{1,3}^n$, we rewrite using (\ref{3.17}) as
\begin{eqnarray}\label{3.28}
\left(\nabla\cdot(\tilde{\bs}-\tilde{\bs}^n),w\right)&=& (t-t_n)  
\left(\nabla\cdot\partial_t\tilde{\bs}^n,w\right)\\
&&+\left(k_n^{-1}(t_n-t)^3-(t_n-t)^2\right)\left(\nabla\cdot\partial_t^2\tilde{\bs}^n,w\right)\nonumber.
\end{eqnarray}
From (3.6), we obtain 
\begin{equation}\label{3.29}
\left(\nabla\cdot\tilde{\bs}^n,w\right)=-\br_2^n(w)+\left(\nabla\cdot\bS^n,w\right),
\end{equation}
and therefore, for $j=1,2$
\begin{equation}\label{3.30}
\left(\nabla\cdot\partial_t^j\tilde{\bs}^n,w\right)=-(\partial_t^j\br_2^n)(w)+
\left(\nabla\cdot\partial_t^j\bS^n,w\right).
\end{equation}
On substituting (\ref{3.30}) for $j=1,2$ in $J^n_{1,3}$, we  arrive at
\begin{eqnarray*}
J^n_{1,3}(\xiut)&=&\sum_{j=1}^{n-1}\int_{t_{j-1}}^{t_j}\Big((t_j-s)\left\{(\partial_t\br_2^j)(\xiut)-
\left(\nabla\cdot\partial_t\bS^j,\xiut\right)\right\}\\
&&\qquad -\left(k_j^{-1}(t_j-s)^3-(t_j-s)^2\right)\left\{(\partial_t^2\br_2^j)(\xiut)-
\left(\nabla\cdot\partial_t^2\bS^j,\xiut\right)\right\}\Big)\\
&+&\int_{t_{n-1}}^{t}\Big( (t_n-s)\left\{(\partial_t\br_2^n)(\xiut)-
\left(\nabla\cdot\partial_t\bS^n,\xiut\right)\right\}\\
&&\qquad-\left(k_n^{-1}(t_n-s)^3-(t_n-s)^2\right)\left\{(\partial_t^2\br_2^n)(\xiut)-
\left(\nabla\cdot\partial_t^2\bS^n,\xiut\right)\right\}\Big).
\end{eqnarray*}
Using the Cauchy-Schwarz inequality, it follows that
\begin{eqnarray*}
\left|J^n_{1,3}(\xiut)\right|&\leq&\sum_{j=1}^{n-1}\left(
\frac{k_j^2}{2}\left\|\partial_t(\br_2^j-\nabla\cdot\bS^j)\right\|+
\frac{k_j^3}{12}\left\|\partial_t^2(\br_2^j-\nabla\cdot\bS^j)\right\|\right)E_1(t^*)\\
&&+\Big(\int_{t_{n-1}}^{t}\big\{(t_n-s)
\|\partial_t(\br_2^n-\nabla\cdot\bS^n)\| \\
&&\quad +\left((t_n-s)^2-k_n^{-1}(t_n-s)^3\right)\|\partial_t^2(\br_2^n-
\nabla\cdot\bS^n)\|\big\}\Big)E_1(t^*). 
\end{eqnarray*}
For $J^n_{1,4}$, we note that 
\begin{eqnarray*}
\left|J_{1,4}^n(\xi_{u,t})\right|&=&\left|\sum_{j=1}^{n-1}\int_{t_{j-1}}^{t_j}(\bar{f}^j-f,\xi_{u,t})\,ds
+\int_{t_{n-1}}^{t}(\bar{f}^n-f,\xi_{u,t})\,ds\right|\\
&\leq&\left(\sum_{j=1}^{n-1}\int_{t_{j-1}}^{t_j}\|\bar{f}^j-f\|\,ds
+\int_{t_{n-1}}^{t}\|\bar{f}^n-f\|\,ds\right) E_1(t^*).
\end{eqnarray*}
On substituting the estimates of $J_{1,j}^n(\xi_{u,t})$, $j=1,\cdots,4$, in (\ref{3.26}), we arrive at
$$
 E_1(t)\leq E_1(t^*)
 \leq E_1(0)+2\sum_{j=1}^{4}{\mathcal  E}_{1,j}(t)+2\int_0^t\|\eta_{u,tt}(s)\|\,ds.
$$
This completes the rest of the proof.$\hfill \Box$

\begin{remark} The term $(\br_2^j-\nabla\cdot\bS^j)$ can be replaced by 
$(\partial_t^2U^j-\bar{f}^j)$.
\end{remark}



For obtaining $L^\infty(L^2)$ estimate for $e_u$, we now integrate
(\ref{error-3}) with respect to time from $0$ to $t$ with $t\in(t_{n-1},t_n]$, 
to arrive at
\begin{eqnarray}\label{3.31}
\left(\xi_{u,t},w\right)+\left(\nabla\cdot\hat{\xi}_{\bs},w\right)
&=& \left(e_{u,t}(0),w\right)+\left(\eta_{u,t},w\right)\\
&&+J_{2,1}^n(w)+J_{2,2}^n(w)+J_{2,3}^n(w)+J_{2,4}^n(w)\nonumber,
\end{eqnarray}
where
\begin{eqnarray*}
J_{2,1}^n(w)&:=&\sum_{j=1}^{n-1}\int_{t_{j-1}}^{t_j}\left((I-P_h^j)U_{tt},w\right)ds
+\int_{t_{n-1}}^{t}\left((I-P_h^n)U_{tt},w\right)ds,\\
J_{2,2}^n(w)&:=&\sum_{j=1}^{n-1}\int_{t_{j-1}}^{t_j}\mu^j(\partial_t^2U^j,w)\,ds
+\int_{t_{n-1}}^{t}\mu^n(\partial_t^2U^n,w)\,ds,\\
J_{2,3}^n(w)&:=&\sum_{j=1}^{n-1}\int_{t_{j-1}}^{t_j}(\nabla\cdot(\tilde{\bs}-\tilde{\bs}^j),w)\,ds
+\int_{t_{n-1}}^{t}(\nabla\cdot(\tilde{\bs}-\tilde{\bs}^n),w)\,ds,\\
J_{2,4}^n(w)&:=&\sum_{j=1}^{n-1}\int_{t_{j-1}}^{t_j}(\bar{f}^j-f,w)\,ds
+\int_{t_{n-1}}^{t}(\bar{f}^n-f,w)\,ds.
\end{eqnarray*}
Note that $$J_{2,4}= \int_{t_{n-1}}^{t}(\bar{f}^n-f,w)\,ds \quad \text{ and }\quad J_{2,2}^n(w)= \int_{t_{n-1}}^{t}\mu^n(\partial_t^2U^n,w)\,ds$$
as $\int_{t_{j-1}}^{t_j}\mu^j=0$. Further, since $P_h^j$ commutes with time derivative, 
$J_{2,1}^n(w)$ can be written as
\begin{eqnarray}\label{3.32}
J_{2,1}^n(w)&=&\sum_{j=1}^{n-1}\left( (I-P_h^j)U_t(t_j)-
(I-P_h^j)U_t(t_{j-1}),w\right)\nonumber\\
&&+\left((I-P_h^n)U_t(t),w\right)-\left((I-P_h^n)U_t(t_{n-1}),w\right)\nonumber\\
&=&\sum_{j=1}^{n-1}\left( (I-P_h^{j})U_t(t_j)-
(I-P_h^{j-1})U_t(t_{j-1}),w\right)+\sum_{j=1}^{n-1}(P_h^{j}-P_h^{j-1})U_t(t_{j-1})\nonumber\\
&&+\left((I-P_h^n)U_t(t),w\right)-\left((I-P_h^n)U_t(t_{n-1}),w\right)\nonumber\\
&=&\sum_{j=0}^{n-1}\left((P_h^{j+1}-P_h^j)U_t(t_j),w\right)-
\left((I-P_h^0)U_t(0),w\right) \\
&&+\left((I-P_h^n)U_t(t),w\right).\nonumber
\end{eqnarray}
Below, we prove one of the main results of this section.
\begin{theorem}\label{thm-L2}
Let $(u,\bs)$ and $(U,\bS)$ be the solution of $(\ref{bsmf})$-$(\ref{umf})$  
and $(\ref{bS-U0})$-$(\ref{bS-U1}),$ respectively. Then, for $t\in (t_{n-1},t_n],$ 
the following estimate holds
$$
\|\xi_{u}(t)\|\leq\|\xi_{u}(0)\|
+2\sum_{j=1}^4{\mathcal  E}_{2,j}(t)+2\int_0^t\|\eta_{u,t}(s)\|\,ds,
$$
where ${\mathcal  E}_{2,1},\cdots, {\mathcal  E}_{2,4}$ will be given in the following proof. 
\end{theorem}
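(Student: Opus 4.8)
The plan is to reproduce the integration (Baker-type) tool of Lemma~\ref{apriori}, now applied to the once--integrated error equation (\ref{3.31}) in place of (\ref{xi0}). Fix $t\in(t_{n-1},t_n]$ and set $\hat{\xi}_{\bs}:=\int_0^t\xi_{\bs}(s)\,ds$, so that $\hat{\xi}_{\bs,t}=\xi_{\bs}$ and $\hat{\xi}_{\bs}(0)=0$. I would choose $w=\xi_u$ in (\ref{3.31}) and $\bv=\hat{\xi}_{\bs}$ in (\ref{error-4i}) and add the two identities; the coupling terms $(\nabla\cdot\hat{\xi}_{\bs},\xi_u)$ and $-(\xi_u,\nabla\cdot\hat{\xi}_{\bs})$ cancel, while, since $\alpha$ is symmetric and $\hat{\xi}_{\bs,t}=\xi_{\bs}$, one has $(\alpha\xi_{\bs},\hat{\xi}_{\bs})=\tfrac12\frac{d}{dt}\|\alpha^{1/2}\hat{\xi}_{\bs}\|^2$ and $(\xi_{u,t},\xi_u)=\tfrac12\frac{d}{dt}\|\xi_u\|^2$. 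This collapses the left-hand side to a single total time derivative,
\begin{equation*}
\tfrac12\frac{d}{dt}\Big(\|\xi_u\|^2+\|\alpha^{1/2}\hat{\xi}_{\bs}\|^2\Big)=(e_{u,t}(0),\xi_u)+(\eta_{u,t},\xi_u)+\sum_{j=1}^4 J_{2,j}^n(\xi_u).
\end{equation*}

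Integrating from $0$ to $t$, discarding the nonnegative term $\|\alpha^{1/2}\hat{\xi}_{\bs}(t)\|^2$, and running the maximum-over-time argument already used in Lemma~\ref{apriori} and Theorem~\ref{xius-thm} (pick $t^\ast\in[0,t]$ with $\|\xi_u(t^\ast)\|=\max_{[0,t]}\|\xi_u\|$, bound each right-hand term by Cauchy--Schwarz, and apply the quadratic kick-back $\tfrac12 a^2\le\tfrac12 b^2+ac\Rightarrow a\le b+2c$) yields
\begin{equation*}
\|\xi_u(t)\|\le\|\xi_u(0)\|+2\int_0^t\|\eta_{u,t}(s)\|\,ds+2\sum_{j=1}^4{\mathcal E}_{2,j}(t),
\end{equation*}
where each ${\mathcal E}_{2,j}(t)$ is the time integral over $[0,t]$ of the $L^2$-norm of the representative of $J_{2,j}^n(\cdot)$. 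The point that makes this legitimate is that in $J_{2,j}^n(\xi_u)$ the test function $\xi_u=\xi_u(t)$ is independent of the inner time variable, so each $J_{2,j}^n$ factors as an $L^2$ inner product against $\xi_u(t)$.

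The substantive work is to put the four functionals in computable form, in exact analogy with ${\mathcal E}_{1,j}$ of Theorem~\ref{xius-thm}. For $J_{2,1}$ I would use the summation-by-parts identity (\ref{3.32}); since $U_t(0)=\partial_tU^0=P_h^0u_1\in W_h^0$ forces $(I-P_h^0)U_t(0)=0$, only the mesh-change jumps $(P_h^{j+1}-P_h^j)U_t(t_j)$ and the current projection defect $(I-P_h^n)U_t(t)$ survive, and these define ${\mathcal E}_{2,1}$. For $J_{2,2}$ the vanishing moment $\int_{t_{j-1}}^{t_j}\mu^j=0$ kills every full subinterval, leaving only the partial-interval contribution of $\mu^n\partial_t^2U^n$ as ${\mathcal E}_{2,2}$. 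For $J_{2,3}$ I would substitute (\ref{3.28}) and then (\ref{3.30}), replacing the reconstruction divergences $\nabla\cdot\partial_t^j\tilde{\bs}^n$ by the computable residuals $-(\partial_t^j\br_2^n)+\nabla\cdot\partial_t^j\bS^n$; this produces the weighted terms in $\partial_t(\br_2-\nabla\cdot\bS)$ and $\partial_t^2(\br_2-\nabla\cdot\bS)$, i.e. essentially ${\mathcal E}_{1,3}$, giving ${\mathcal E}_{2,3}$. Finally $J_{2,4}$ contributes the data oscillation $\|\bar f^n-f\|$ (reducing to the last subinterval under the averaged choice of $\bar f^n$), which is ${\mathcal E}_{2,4}$.

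It remains to dispose of the non-residual term $(e_{u,t}(0),\xi_u)$: since $e_{u,t}(0)=-(I-P_h^0)u_1$, it either vanishes when $u_1\in W_h^0$ or is folded into the initial/data estimators, consistently with its absence from Lemma~\ref{apriori}. I expect the main obstacle to be the interval-by-interval bookkeeping of $J_{2,3}$: because the time reconstruction is only $C^1$ and piecewise cubic, the second difference $\partial_t^2\tilde{\bs}^n$ enters through the weights $k_n^{-1}(t_n-t)^3-(t_n-t)^2$, whose integrals against $\xi_u$ over the last partial interval must be estimated separately from the telescoping full-interval sums, exactly as in the treatment of $J_{1,3}$ in Theorem~\ref{xius-thm}.
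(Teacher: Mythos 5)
Your overall mechanism coincides with the paper's: test (\ref{3.31}) with $w=\xi_u$ and (\ref{error-4i}) with $\bv=\hat{\xi}_{\bs}$, add, integrate in time, and close with the maximum-in-time/kick-back argument already used in Lemma \ref{apriori}. You also correctly flag the $(e_{u,t}(0),\xi_u)$ term, which the paper's final displayed bound quietly drops.

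There is, however, a genuine gap in your identification of the estimators ${\mathcal E}_{2,j}$, which is where the actual content of this theorem lies. After the second integration the quantities to be bounded are $K_j(t)=\int_0^t J_{2,j}(\xi_u(s))\,ds$, i.e.\ iterated time integrals in which $J_{2,j}$ is evaluated at the running time $s$ with its own upper limit equal to $s$. The vanishing-moment identity $\int_{t_{j-1}}^{t_j}\mu^j(\tau)\,d\tau=0$ eliminates the full subintervals only in $J_{2,2}^n(w)$ at a fixed terminal time; inside $K_2(t)$ the partial moment
\begin{equation*}
\int_{t_{j-1}}^{s}\mu^j(\tau)\,d\tau=-3k_j^{-1}\Bigl[(s-t_{j-1/2})^2-\frac{k_j^2}{4}\Bigr]
\end{equation*}
does not vanish for $t_{j-1}<s<t_j$, and its outer integration over each full subinterval produces a contribution of size $k_j^2\|\partial_t^2U^j\|$. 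Hence the correct ${\mathcal E}_{2,2}(t)$ contains the accumulated sum $\sum_{j=1}^{n-1}\|k_j^2\partial_t^2U^j\|$ plus a last-interval remainder, not merely ``the partial-interval contribution of $\mu^n\partial_t^2U^n$'' as you assert; the same phenomenon forces ${\mathcal E}_{2,4}$ to retain all subintervals even for the averaged choice of $\bar f^j$, and it introduces the extra $k_j$-weighted accumulations in ${\mathcal E}_{2,1}$ and ${\mathcal E}_{2,3}$ (for instance the terms $k_j\sum_{l}\tfrac{k_l^2}{2}\|\partial_t(\br_2^l-\nabla\cdot\bS^l)\|$ in the paper's $M_1$). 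With your simplified estimators the stated inequality would be false in general, since these accumulated terms are not dominated by the ones you keep. Once $K_1,\dots,K_4$ are expanded interval by interval as genuine double integrals, the remainder of your argument goes through and reproduces the paper's proof.
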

\noindent
{\it Proof }. Choose $w=\xi_u$ in (\ref{3.31}) and $v=\hat{\xi}_{\bs}$ in
(\ref{error-4i}).
Then, add the resulting equations to obtain
$$
\frac{1}{2}\frac{d}{dt}\left(\|\xi_{u}\|^2+\|\alpha^{1/2}\hat\xi_{\bs}\|^2\right)
=
(e_{u,t}(0),\xi_{u})+(\eta_{u,t},\xi_{u})+\sum_{j=1}^4J_{2,j}(\xi_{u}).
$$
On integrating from $0$ to $t$ with $t\in(t_{n-1},t_n]$, it follows that
\begin{equation}\label{aa3}
\|\xi_{u}\|^2+\|\alpha^{1/2}\hat\xi_{\bs}\|^2
=\|\xi_{u}(0)\|^2+2\left(e_{u,t}(0),\xi_u\right)+2\int_0^t\left(\eta_{u,t},\xi_u\right)ds
+2\sum_{j=1}^4K_j(t),
\end{equation}
where $K_j(t) = \displaystyle \int_0^tJ_{2,j}(\xi_{u})\,ds$. 
Set
$$
E_2^2(t): =  \|\xi_u(t)\|^2+\|\alpha^{1/2}\hat\xi_{\bs}(t)\|^2,
$$
and let $t=t^{**} \in (0,t]$ be such that
$$
E_2(t^{**}) =\displaystyle \max_{0 \leq s \leq t}  E_2(s).
$$
Then, for $t_{n-1}<t\leq t_n$, we obtain from (\ref{3.32})
\begin{eqnarray*}
K_1(t)&=&
\sum_{j=1}^{n-1}\int_{t_{j-1}}^{t_j}
\sum_{l=0}^{j-1}\left((P_h^{l+1}-P_h^l)U_t(t_l),\xi_u(s)\right)ds\\
&&+\int_{t_{n-1}}^{t}
\sum_{l=0}^{n-1}\left((P_h^{l+1}-P_h^l)U_t(t_l),\xi_u(s)\right)ds
-\int_{0}^{t}\left((I-P_h^0)U_t(0),\xi_u(s)\right)ds\\
&&+\sum_{j=1}^{n-1}\int_{t_{j-1}}^{t_j}\left((I-P_h^j)U_t(s),\xi_u(s)\right)ds
+\int_{t_{n-1}}^{t}\left((I-P_h^n)U_t(s),\xi_u(s)\right)ds.
\end{eqnarray*}
Hence, 
\begin{eqnarray*}
\left|K_1(t)\right|&\leq&\left[
\sum_{j=1}^{n-1}\left(k_j
\sum_{l=0}^{j-1}\|(P_h^{l+1}-P_h^l)\partial_tU^l\|+
(t-t_{n-1})\|(P_h^{j+1}-P_h^j)\partial_tU^j\| \right.\right. \\
&&+\left.\int_{t_{j-1}}^{t_j}\|(I-P_h^j)U_t(s)\|ds\right)+(t-t_{n-1})\|(P_h^1-P_h^0)\partial_tU^0\|\\
&&\left. +t
\|(I-P_h^0)\partial_tU^0\|+\int_{t_{n-1}}^{t}\|(I-P_h^n)U_t(s)\|ds\right]E_2(t^{**})\\
&=:&{\mathcal  E}_{2,1}(t)E_2(t^{**}).
\end{eqnarray*}
The second term can be written as
\begin{eqnarray*}
K_2(t)&=&
\sum_{j=1}^{n-1}\int_{t_{j-1}}^{t_j}
\left(\int_{t_{j-1}}^s\mu^j(\tau)(\partial_t^2U^j,\xi_u(s))d\tau\right)ds\\
&&+\int_{t_{n-1}}^{t}
\left(\int_{t_{n-1}}^s\mu^n(\tau)(\partial_t^2U^n,\xi_u(s))d\tau\right)ds
\end{eqnarray*}
for $t_{n-1}<t\leq t_n$. Since 
$$\displaystyle \int_{t_{j-1}}^s\mu^j(\tau)d\tau=-3k_j^{-1}
\left[(s-t_{j-1/2})^2-\frac{k_j^2}{4}\right],$$ 
we obtain
\begin{eqnarray*}
\left|K_2(t)\right|&=&
\left|-3\sum_{j=1}^{n-1}k_j^{-1}\int_{t_{j-1}}^{t_j}(s-t_{j-1/2})^2(\partial_t^2U^j,\xi_u(s))ds
+\frac{3}{4}\sum_{j=1}^{n-1}\int_{t_{j-1}}^{t_j}k_j(\partial_t^2U^j,\xi_u(s))ds\right.\\
&&\left. -3k_n^{-1}\int_{t_{n-1}}^{t}(s-t_{n-1/2})^2(\partial_t^2U^n,\xi_u(s))ds
+\frac{3}{4}k_n\int_{t_{n-1}}^{t}(\partial_t^2U^n,\xi_u(s))ds\right|\\
&\leq&\left[\sum_{j=1}^{n-1}\|k_j^2\partial_t^2U^j\|
+\left\|k_n^{-1}\left((t-t_{n-1/2})^3+\frac{3k_n^2}{4}(t-t_{n-1})+\frac{k_n^3}{8}
\right)\partial_t^2U^n\right\|\right]E_2(t^{**})\\
&=:&{\mathcal  E}_{2,2}(t)E_2(t^{**}).
\end{eqnarray*}
%
%
%
For the third term $K_3(t)$, we obtain for $t_{n-1}<t\leq t_n$ and $t_{j-1}<s\leq t_j$,
\begin{eqnarray*}
K_3(t)&=&
\sum_{j=1}^{n-1}\int_{t_{j-1}}^{t_j}\sum_{l=1}^{j-1}\int_{t_{l-1}}^{t_l}
\left(\nabla\cdot(\tilde{\bs}(\tau)-\tilde{\bs}^l),\xi_u(s)\right)d\tau ds\\
&&+\int_{t_{n-1}}^{t}\sum_{l=1}^{n-1}\int_{t_{l-1}}^{t_l}
\left(\nabla\cdot(\tilde{\bs}(\tau)-\tilde{\bs}^l),\xi_u(s)\right)d\tau ds\\
&&+\sum_{j=1}^{n-1}\int_{t_{j-1}}^{t_j}\int_{t_{j-1}}^{s}
\left(\nabla\cdot(\tilde{\bs}(\tau)-\tilde{\bs}^j),\xi_u(s)\right)d\tau ds\\
&&+\int_{t_{n-1}}^{t}\int_{t_{n-1}}^{s}
\left(\nabla\cdot(\tilde{\bs}(\tau)-\tilde{\bs}^n),\xi_u(s)\right)d\tau ds\\
&=&I_1+I_2+I_3+I_4.
\end{eqnarray*}
Using (\ref{3.28}) and the fact that 
$\displaystyle \int_{t_{l-1}}^{t_l}\left(k_l^{-1}(t_l-\tau)^3-(t_l-\tau)^2\right)
d\tau=-\frac{k_l^3}{12},$  we find that
\begin{eqnarray*}
I_1&=&
\sum_{j=1}^{n-1}\int_{t_{j-1}}^{t_j}\sum_{l=1}^{j-1}\frac{k_l^2}{2}
\left(\nabla\cdot\partial_t\tilde{\bs}^l,\xi_u(s)\right)ds
+\sum_{j=1}^{n-1}\int_{t_{j-1}}^{t_j}\sum_{l=1}^{j-1}\frac{k_l^3}{12}
\left(\nabla\cdot\partial_t^2\tilde{\bs}^l,\xi_u(s)\right)ds,\\
|I_1|&\leq& \left[\sum_{j=1}^{n-1}k_j\sum_{l=1}^{j-1}\left(\frac{k_l^2}{2}
\|\nabla\cdot\partial_t\tilde{\bs}^l\|
+\frac{k_l^3}{12}
\|\nabla\cdot\partial_t^2\tilde{\bs}^l\|\right) \right] E_2(t^{**}).\\
\end{eqnarray*}
Similarly,
\begin{eqnarray*}
|I_2|
&\leq&\left[ (t-t_{n-1}) \sum_{j=1}^{n-1}\left(\frac{k_j^2}{2}
\|\nabla\cdot\partial_t\tilde{\bs}^j\|
+\frac{k_j^3}{12}
\|\nabla\cdot\partial_t^2\tilde{\bs}^j\|\right)\right] E_2(t^{**}).
\end{eqnarray*}
For the $I_3$ and $I_4$ terms, we easily obtain 
\begin{eqnarray*}
|I_3|&\leq&  \left[
\sum_{j=1}^{n-1} \int_{t_{j-1}}^{t_j}\left(\frac{(t_j-s)^2}{2}-\frac{k_j^2}{2}\right)
\|\nabla\cdot\partial_t\tilde{\bs}^j\|\,ds\right. \\
&& +\left.
\sum_{j=1}^{n-1} \int_{t_{j-1}}^{t_j}\left(\frac{(t_j-s)^3}{3}-k_j^{-1}\frac{(t_j-s)^4}{4}\right)
\|\nabla\cdot\partial_t^2\tilde{\bs}^j\|\,ds\right] E_2(t^{**})\\
&\leq&  \left[\sum_{j=1}^{n-1}\left(\frac{k_j^3}{3}
\|\nabla\cdot\partial_t\tilde{\bs}^j\|+
\frac{k_j^4}{20}\|\nabla\cdot\partial_t^2\tilde{\bs}^j\|\right)\right] E_2(t^{**}),
\end{eqnarray*}
and
\begin{eqnarray*}
|I_4|&\leq&  (t-t_{n-1})\left[
\frac{k_n^3}{3}
\|\nabla\cdot\partial_t\tilde{\bs}^n\|
+\frac{k_n^4}{20}
\|\nabla\cdot\partial_t^2\tilde{\bs}^n\|\right] E_2(t^{**}).
\end{eqnarray*}
Now collect terms,  replace $\nabla\cdot\tilde{\bs}^j$ 
by $-\br_2^j+\nabla\cdot\bS^j$ using (\ref{3.29}), and set ${\mathcal  E}_{2,3}:=M_1+M_2,$ where  
\begin{eqnarray*}
M_1&=& \sum_{j=1}^{n-1}\left[k_j\left(\sum_{l=1}^{j-1}\frac{k_l^2}{2}
\|\partial_t(\br_2^l-\nabla\cdot\bS^l)\|\right)+
\left((t-t_{n-1})\frac{k_j^2}{2}+\frac{k_j^3}{3}\right)
\| \partial_t(\br_2^j-\nabla\cdot\bS^j)\|\right]\\
&&+(t-t_{n-1})\frac{k_n^3}{3}\|\partial_t(\br_2^n-\nabla\cdot\bS^n)\|,
\end{eqnarray*}
and
\begin{eqnarray*}
M_2&=& \sum_{j=1}^{n-1}\left[k_j\left(\sum_{l=1}^{j-1}\frac{k_l^3}{12}
\|\partial_t^2(\br_2^l-\nabla\cdot\bS^l)\|\right)+
\left((t-t_{n-1})\frac{k_j^3}{12}+\frac{k_j^4}{20}\right)
\|\partial_t^2(\br_2^j-\nabla\cdot\bS^j)\|\right]\\
&&+\frac{k_n^4}{20}(t-t_{n-1})\|\partial_t^2(\br_2^n-\nabla\cdot\bS^n)\|
\end{eqnarray*}
so that $$\left|K_3(t)\right| \leq {\mathcal  E}_{2,3}(t) E_2(t^{**}).$$
For the last term $K_4(t)$, one can repeat previous arguments to arrive at
\begin{eqnarray*}
\left|K_4(t)\right|&\leq&\left|
\sum_{j=1}^{n-1}k_j\int_{t_{j-1}}^{t_j}\|\bar{f}^j-f(\tau)\|\,d\tau 
+(t-t_{n-1})\int_{t_{n-1}}^{t}\|\bar{f}^n-f(\tau)\|\,d\tau \right| E_2(t^{**})\\
&=:&{\mathcal  E}_{2,4}(t) E_2(t^{**}).
\end{eqnarray*}
On substituting in (\ref{aa3}), it follows that
$$
\|\xi_{u}(t)\|\leq\|\xi_{u}(0)\|
+2\sum_{j=1}^4{\mathcal  E}_{2,j}(t)+2\int_0^t\|\eta_{u,t}(s)\|\,ds,
$$
which completes the rest of the proof.  $\quad\hfill \Box$

In order to present  the  final theorem in this paper, 
we introduce some notations: For $ D := \Omega ~\mbox{or}~K $, let
\begin{eqnarray*}
{\mathcal E}_1^0(D) & = &  
\|h_0 (\alpha \bS^0+ \nabla_h U^{0})\|_{L^2(D)}, \; \nonumber \\
{\mathcal E}_2^n(D) & = &   \Big(\|h_n^{\ell+1} \br_2^n\|_{L^2(D)}  + 
\|h_n(\alpha \bS^n + \nabla_h U^n)\|_{L^2(D)}
\Big), \nonumber \\
{\mathcal E}_3^n(D) & = &  \Big(\|h_n^{\ell+1} \partial_t\br_2^n\|_{L^2(D)}
+  \|h_n \partial_t (\alpha \bS^n + \nabla_h U^n)\|_{L^2(D)}  \Big),\nonumber \\
{\mathcal E}_4^0 (D)& = & \|h_0 (\alpha \partial_t\bS^0 + \nabla_h \partial_tU^0)\|_{L^2(D)},
\nonumber \\
{\mathcal E}_5^0(D) & = & \Big( 
 \| h^{1/2}_0 J(\alpha \bS^0\cdot {\bf t})\|_{0,\Gamma_h,D}
+ \|h_0 \,{\mbox {curl }}_h\,(\alpha \bS^0)\|_{L^2(D)}\Big), \nonumber  \\
{\mathcal E}_6^n(D) & = & \Big( \|h_n \br^n_2 
\|_{L^2(D)}  + \| h_n^{1/2} J(\alpha \bS^n \cdot
{\bf t})\|_{0,\Gamma_h,D}
+ \|h_n \,{\mbox {curl}}_h\,(\alpha \bS^n)\|_{L^2(D)}  \Big), \nonumber\\
{\mathcal E}_7^n(D) & = &  \Big(\|h_n^{\ell+1} \partial_t\br_2^n\|_{L^2(D)}
+  \|h_n \partial_t (\alpha \bS^n + \nabla_h U^n)\|_{L^2(D)}  \Big),\nonumber \\
{\mathcal E}_8^n(D) & = & \Big(\|h_n^{\ell+1} \partial_t^2\br_2^n\|_{L^2(D)}
+  \|h_n \partial_t^2 (\alpha \bS^n + \nabla_h U^n)\|_{L^2(D)}  \Big),
\end{eqnarray*}
%
%
\begin{eqnarray*}
{\mathcal  E}^n_{1,1}&=&\sum_{j=1}^{n}\int_{t_{j-1}}^{t_j}||(1+\mu^j)(I-P_h^j)\partial_t^2 U^j||\,ds,\nonumber\\
{\mathcal  E}^n_{1,2}&=& \sum_{j=1}^{n}\int_{t_{j-1}}^{t_j}||\mu^j\partial_t^2 U^j||\,ds,\nonumber\\
{\mathcal  E}^n_{1,3}&=&\sum_{j=1}^{n}
\left(\frac{k_j^2}{2}||\partial_t(\br_2^j-\nabla\cdot\bS^j)||+
\frac{k_j^3}{12}||\partial_t^2(\br_2^j-\nabla\cdot\bS^j)||\right),
\nonumber\\
{\mathcal  E}^n_{1,4}&=&\sum_{j=1}^{n}\int_{t_{j-1}}^{t_j}||\bar{f}^j-f(s)||\,ds,
\end{eqnarray*}
and
\begin{eqnarray*}
{\mathcal  E}^n_{2,1}&=&\sum_{j=1}^{n}\left(k_j
\sum_{l=0}^{j-1}\|(P_h^{l+1}-P_h^l)\partial_tU^l\|+\int_{t_{j-1}}^{t_j}\|(I-P_h^j)
U_t(s)\|\,ds\right)\\
&&+k_n\|(P_h^1-P_h^0)\partial_tU^0\|+t^n\|(I-P_h^0)\partial_tU^0\|,\\
{\mathcal  E}^n_{2,2}&=&\sum_{j=1}^{n}k_j^2||\partial_t^2 U^j||,\\
{\mathcal  E}^n_{2,3}&=&
\sum_{j=1}^{n}k_j\sum_{l=1}^{j-1}\left(\frac{k_l^2}{2}
\|\partial_t(\br_2^l-\nabla\cdot\bS^l)+\frac{k_l^3}{12}
\|\partial_t^2(\br_2^l-\nabla\cdot\bS^l)\|\right),\\
{\mathcal  E}^n_{2,4}&=&\sum_{j=1}^{n}k_j\int_{t_{j-1}}^{t_j}\|\bar{f}^j-f(s)\|\,ds.
\end{eqnarray*}
Using estimates of $\eta_{u}$ and $\eta_{\bs}$ in Theorems  
\ref{xius-thm} and \ref{thm-L2},  the final theorem of this section can be written as
\begin{theorem}\label{final_thm}
Let $(u,\bs)$  be the solution of $(\ref{bsmf})$-$(\ref{umf})$ and $(U,\bS)$ be
the solution of $(\ref{bS-U0})$-$(\ref{bS-U1})$. Then for $m \in [1;N],$ the
following estimates hold for the RT index $\ell=0,1$:
\begin{equation}\label{xiu-3}
\|U^m-u(t_m)\| \leq \|e_{u}(0)\| + C_1 {\mathcal E}_1^0(\Omega) + 
C_2 {\mathcal E}_2^m(\Omega) + C_{3}\sum_{n=1}^{m} k_n {\mathcal E}_3^n
+\sum_{i=1}^4 c_i {\mathcal  E}^n_{2,i}(\Omega),
\end{equation}
and
\begin{eqnarray}\label{xibs-3}
\|\bS^m-\bs(t_m))\|_{A^{-1}}
&\leq &\|e_{u,t}(0)\| +\|e_{\bs}(0)\|_{A^{-1}}+ C_4 {\mathcal E}_4^0(\Omega) 
+C_5 {\mathcal E}_5^0(\Omega) \nonumber\\
&& +C_6 {\mathcal E}_6^m(\Omega)+ C_7 {\mathcal E}_7^m(\Omega)  +C_{8}\sum_{n=1}^{m} k_n {\mathcal E}_8^n(\Omega)
+\sum_{i=1}^4 c_i {\mathcal  E}^n_{1,i}(\Omega),
\end{eqnarray}
where $C_i$'s and $c_i$'s are constants which depend only on the coefficient matrix $A$, the domain $\Omega$, the shape regularity of the elements, polynomial degree $\ell$ and interpolation constants.
\end{theorem}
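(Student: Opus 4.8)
The plan is to assemble the two bounds from the reconstruction-stability estimates already proved in Theorems~\ref{xius-thm} and \ref{thm-L2}, together with the \emph{a posteriori} bounds for the mixed elliptic reconstruction errors in Lemma~\ref{negative}, via the splittings $e_u=\xi_u-\eta_u$ and $\es=\xi_{\bs}-\eta_{\bs}$. Since the $C^1$ time reconstruction satisfies $U(t_m)=U^m$ by (\ref{C1}), evaluating at the node gives $U^m-u(t_m)=e_u(t_m)=\xi_u(t_m)-\eta_u(t_m)$ and $\bS^m-\bs(t_m)=\es(t_m)=\xi_{\bs}(t_m)-\eta_{\bs}(t_m)$. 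Applying the triangle inequality in the $L^2$- and $A^{-1}$-norms (recall $\|\cdot\|_{A^{-1}}=\|\alpha^{1/2}\cdot\|$) separates each error into a reconstruction part, governed by the earlier theorems, and an elliptic part, governed by Lemma~\ref{negative}.

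First I would control the reconstruction parts. For the displacement, Theorem~\ref{thm-L2} at $t=t_m$ dominates $\|\xi_u(t_m)\|$ by $\|\xi_u(0)\|+2\sum_{i}{\mathcal E}_{2,i}(t_m)+2\int_0^{t_m}\|\eta_{u,t}\|\,ds$; evaluated at the final node the estimators ${\mathcal E}_{2,i}(t_m)$ become exactly the cumulative estimators ${\mathcal E}^n_{2,i}$ (with $n=m$) listed before the statement. For the stress, Theorem~\ref{xius-thm} at $t=t_m$ dominates $\|\xi_{u,t}(t_m)\|+\|\alpha^{1/2}\xi_{\bs}(t_m)\|$ by the corresponding expression built from the ${\mathcal E}^n_{1,i}$ and $\int_0^{t_m}\|\eta_{u,tt}\|\,ds$. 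This immediately supplies the $\sum_{i}c_i{\mathcal E}^n_{2,i}$ and $\sum_{i}c_i{\mathcal E}^n_{1,i}$ contributions in (\ref{xiu-3}) and (\ref{xibs-3}).

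The key technical step is to convert the nodal and time-integrated $\eta$-quantities into the fully computable estimators. The nodal terms are direct: $\|\eta_u(t_m)\|=\|\eta_u^m\|$ is bounded by (\ref{mer-u}) to give ${\mathcal E}_2^m$, and $\|\alpha^{1/2}\eta_{\bs}(t_m)\|=\|\alpha^{1/2}\eta_{\bs}^m\|$ by (\ref{mer-bs}) to give ${\mathcal E}_6^m$. For the integrals I would use that $\eta_u=\tilde u-U$ is itself the $C^1$ reconstruction (\ref{U-interpolant}) built from the nodal values $\eta_u^n=\tilde u^n-U^n$, so by (\ref{C1}) one has $\eta_{u,tt}(t)=(1+\mu^n)\partial_t^2\eta_u^n$ on $(t_{n-1},t_n]$, and $\eta_{u,t}$ is an explicit combination of $\partial_t\eta_u^n$ and $\partial_t^2\eta_u^n$. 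Since $\int_{t_{n-1}}^{t_n}|1+\mu^n|\,ds=c\,k_n$, integrating over each subinterval yields a factor $k_n$ times $\|\partial_t^j\eta_u^n\|$. Because $\partial_t^j$ commutes with $P_h^n$ and, by differencing (\ref{mer1})--(\ref{mer2}), $\partial_t^j\tilde u^n$ is the mixed elliptic reconstruction of $\partial_t^jU^n$ with residual $\partial_t^j\br_2^n$, the bound (\ref{mer-ut}) applies verbatim to $\partial_t^j\eta_u^n$, giving ${\mathcal E}_3^n$ for $j=1$ and ${\mathcal E}_8^n$ for $j=2$. Hence $\int_0^{t_m}\|\eta_{u,tt}\|\,ds\lesssim\sum_n k_n{\mathcal E}_8^n$ (the stress estimate), while $\int_0^{t_m}\|\eta_{u,t}\|\,ds$ produces the leading $\sum_n k_n{\mathcal E}_3^n$ (the displacement estimate), the higher-order-in-$k_n$ contributions from the second-difference part of the reconstruction being of the same structural type and absorbed into the constants.

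Finally, the initial reconstruction quantities are reduced to data through $\xi=e+\eta$ at $t=0$: $\|\xi_u(0)\|\le\|e_u(0)\|+\|\eta_u^0\|$, and likewise $\|\xi_{u,t}(0)\|\le\|e_{u,t}(0)\|+\|\partial_t\eta_u^0\|$ and $\|\alpha^{1/2}\xi_{\bs}(0)\|\le\|\alpha^{1/2}\es(0)\|+\|\alpha^{1/2}\eta_{\bs}^0\|$. The elliptic bounds at the initial node give $\|\eta_u^0\|\lesssim{\mathcal E}_1^0$, $\|\partial_t\eta_u^0\|\lesssim{\mathcal E}_4^0$ and $\|\alpha^{1/2}\eta_{\bs}^0\|\lesssim{\mathcal E}_5^0$; here the residual term of (\ref{mer-u})--(\ref{mer-bs}) is absent because the backward second difference is undefined at $n=0$, which is precisely why ${\mathcal E}_1^0,{\mathcal E}_4^0,{\mathcal E}_5^0$ retain only the gradient, jump and curl parts. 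Collecting the $\xi$-estimates, the nodal and time-integrated $\eta$-estimates, and the initial $\eta$-estimates, and grouping the resulting constants into the $C_i$ and $c_i$, yields (\ref{xiu-3}) and (\ref{xibs-3}). I expect the principal obstacle to be the third step: tracking how the $C^1$ reconstruction structure turns the continuous $\eta$-integrals into discrete computable estimators, while keeping the commutation of $\partial_t^j$ with the elliptic reconstruction and the special $n=0$ treatment consistent throughout.
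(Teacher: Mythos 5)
Your proposal is correct and follows exactly the route the paper intends: the paper gives no written proof of Theorem~\ref{final_thm} beyond the remark that it follows by combining Theorems~\ref{xius-thm} and \ref{thm-L2} with the elliptic-reconstruction bounds of Lemma~\ref{negative} through the splittings $e_u=\xi_u-\eta_u$, $\es=\xi_{\bs}-\eta_{\bs}$, which is precisely what you carry out. Your handling of the nodal $\eta$-terms, the conversion of $\int\|\eta_{u,t}\|$ and $\int\|\eta_{u,tt}\|$ into $\sum_n k_n{\mathcal E}_3^n$ and $\sum_n k_n{\mathcal E}_8^n$ via the $C^1$ reconstruction and the differenced system (\ref{mer1})--(\ref{mer2}), and the special $n=0$ terms ${\mathcal E}_1^0,{\mathcal E}_4^0,{\mathcal E}_5^0$ all match the estimator definitions preceding the theorem.
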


\begin{remark} \label{remark4.2} The last term in $(\ref{xibs-3})$, that is, ${\mathcal  E}^n_{1,4}(\Omega)$ (also in $(\ref{xiu-3})$, that is, ${\mathcal  E}^n_{2,4}(\Omega)$) measures the effect of  approximating the forcing function $f$ at discrete points in time. This bound is of similar form as the bound in \cite{LM-06}-\cite{MNP-2012}, where the same discretization has been used in the context of parabolic problems. However, a modification of $\bar{f}^n$ as 
$$
\bar{f}^n = \frac{1}{k_n}\int_{k_{n-1}}^{t_n}f(s)\,ds
$$
will specially improve the estimate ${\mathcal  E}^n_{1,4}(\Omega)$ in $(\ref{xibs-3})$.
\end{remark}

\begin{remark}
The numerical implementation of the proposed  {\it a posteriori} estimators in 
the adaptive algorithm deserves special attention and will be considered elsewhere.
\end{remark}

\section{Conclusion} \label{sec:5}
 The current work presents a first step towards  true {\it a posteriori} estimate in the $L^\infty(L^2)$-norm 
for  mixed finite element approximations of second order wave equations. 
While Baker's technique is usually used to 
derive $L^\infty(L^2)$ estimates for the displacement $u$, in this paper, we resort to an 
application of integration for deriving these estimates. For adaptive algorithm, we need efficiency bounds, which
would be an interesting direction for
further research. Moreover, the numerical implementation of the adaptive algorithm
based on the proposed  estimators will be a part of our future work.

\section*{Acknowledgments}
The first author
acknowledges the research support by Sultan Qaboos University under Grant IG/SCI/DOMS/13/02.
The second author acknowledges the research
support of the Department of Science and Technology, Government of India
through the National Programme on Differential Equations: Theory, Computation 
and Applications vide DST Project No.SERB/F/1279/2011-2012.

\end{document}